\documentclass[11 pt]{amsart}
\usepackage{fancyhdr}
\usepackage[margin=1.15in]{geometry}
\usepackage{amsthm}
\usepackage{amsmath}
\usepackage{amssymb}
\usepackage{enumerate}
\usepackage{graphicx}
\usepackage{cancel}
\usepackage{hyperref}
\usepackage{comment}
\usepackage{tikz}
\usetikzlibrary{matrix}

\newtheorem{thm}{Theorem}[section]

\theoremstyle{definition}
\newtheorem{lem}[thm]{Lemma}

\newtheorem{exmp}[thm]{Example}
\theoremstyle{remark}
\newtheorem{lem*}{Lemma}

 {\begin{list}{}%
         {\setlength{\leftmargin}{#1}}%
         \item[]%
 }
 {\end{list}}
\input xy
\xyoption{all}

\title{Properties of Full-Flag Johnson Graphs}
\author[Dai]{Irving Dai}
\address[Irving Dai]{Harvard College\\ University Hall \\
Cambridge, MA 02138, USA}
\email{ifdai@college.harvard.edu}

\author[Greenberg]{Michael Greenberg}
\address[Michael Greenberg]{University of North Carolina, Chapel Hill}

\author[Schoem]{Noah Schoem}
\address[Noah Schoem]{University of Chicago}

\author[Tanzer]{Matt Tanzer}
\address[Matt Tanzer]{University of Michigan, Ann Arbor}

\begin{document}
\maketitle
\vspace{-1 cm}
\begin{abstract}  
We study a variant of the family of Johnson graphs, the Full-Flag Johnson graphs. We show that Full-Flag Johnson graphs are Cayley graphs on $S_n$ generated by certain classes of permutations, and that they are in fact generalizations of permutahedra. We derive some results about the adjacency matrices of Full-Flag Johnson graphs and apply these to the set of permutahedra to deduce part of their spectra.
\end{abstract}

\section{Introduction}
\noindent 
The Johnson graphs and permutahedra are well-known and well-studied families of graphs. For positive integers $n$ and $k$ with $k < n$, the Johnson graph $J(n,k)$ has vertex set given by the collection of all $k$-element subsets of $[n] = \{1, 2, \ldots, n\}$. Two vertices are adjacent if and only if their intersection has size $k-1$. The Johnson graphs are known to be Ramanujan, and their spectra are given by the Eberlein polynomials \cite{krebs}. For a positive integer $n$, the permutahedron of order $n$ has vertex set consisting of all permutations of $(n) = (1, 2, \ldots, n)$. Two vertices are adjacent if and only if they are of the form $(u_1, u_2, \ldots, u_i, u_{i+1}, \ldots, u_n)$ and $(u_1, u_2, \ldots, u_{i+1}, u_i, \ldots, u_n)$, respectively, that is, $u$ and $v$ are adjacent if they differ by a permutation that transposes two consecutive elements -- called a neighboring transposition. Permutahedra appear frequently in geometric combinatorics \cite{thomas}, and are known to be Hamiltonian \cite{elhashash}. \\
\\
In this paper, we present and discuss some characteristics of a variant of the set of Johnson graphs, the Full-Flag Johnson graphs. We show that Full-Flag Johnson graphs are Cayley graphs on $S_n$ generated by certain classes of permutations, and that they are in fact generalizations of permutahedra. We then investigate simple graph-theoretic properties of Full-Flag Johnson graphs, including connectedness and diameter. Finally, we derive some results about the adjacency matrices of Full-Flag Johnson graphs and apply these to the set of permutahedra in order to deduce part of their spectra. \\
\\
Much of this work was done at the Program in Mathematics for Young Scientists (Boston University, Summer 2010). The authors of this paper would like to thank Dr. Paul Gunnells and Nakul Dawra for their support and mathematical advice throughout this project. The results of Section 6 (Applications to Permutahedra) were largely developed by the first author after Summer 2010.

\section{Definitions and Examples}
\noindent
Let $n$ be a positive integer. A full-flag of subsets of $[n] = \{1, 2, \ldots, n\}$ is a sequence $U = (U_1, U_2, \ldots, U_n)$ such that
\begin{enumerate} \itemsep0pt
\item For each $i \in [n]$, $U_i$ is a subset of $[n]$,
\item For each $i \in [n-1]$, $U_i$ is a proper subset of $U_{i+1}$, and 
\item For each $i \in [n]$, $\left| U_i \right| = i$. 
\end{enumerate}
For example, one full-flag of subsets of $\{1, 2, 3, 4\}$ is $(\{3\}, \{3, 1\}, \{3, 1, 2\}, \{3, 1, 2, 4\})$. Let $k$ be a non-negative integer with $k < n$. The Full-Flag Johnson graph \textit{FJ}$(n,k)$ has vertex set $V(\textit{FJ}(n,k))$ given by the collection of all possible full-flags of $[n]$. Two vertices $U = (U_1, U_2, \ldots, U_n)$ and $V = (V_1, V_2, \ldots, V_n)$ are adjacent in \textit{FJ}$(n,k)$ if and only if $U_i \neq V_i$ for exactly $k$ integers $i \in [n]$. Equivalently, if we view $U$ and $V$ as collections of subsets of $[n]$, then $U$ and $V$ are adjacent if and only if $|U \cap V| = n - k$.\\
\\
We give an equivalent definition of \textit{FJ}$(n,k)$ that simplifies the vertex set at the expense of complicating relations between vertices. Let $U = (U_1, U_2, \ldots, U_n)$ be any vertex in \textit{FJ}$(n,k)$. For $1 < i \leq n$, the difference $U_i - U_{i-1}$ is a singleton set whose element is denoted by $u_i$. Letting $u_1$ be the singleton element of $U_1$, we may identify $U$ uniquely with the sequence $u = (u_1, u_2, \ldots, u_n)$. It is clear that $u$ must be a permutation of $(n) = (1, 2, \ldots, n)$ and that $U_i = \{u_1, u_2, \ldots, u_i\}$. Since every permutation of $(n)$ corresponds to a full-flag of subsets of $[n]$ in this manner, we may view the vertex set of \textit{FJ}$(n,k)$ as the collection of all permutations of $(n)$. Two vertices $u=(u_1, u_2,  \ldots, u_n)$ and $v=(v_1, v_2, \ldots, v_n)$ are adjacent if and only if there exist exactly $k$ integers $i \in [n]$ such that $\{u_1, u_2, \ldots, u_i\}$ and $\{v_1, v_2, \ldots, v_i\}$ are not equal. \\
\\
For example, two vertices in \textit{FJ}$(5,2)$ are $u=(1, 2, 3, 4, 5)$ and $v=(2, 1, 3, 5, 4)$; $u$ and $v$ are adjacent since $\{u_1, u_2, \ldots, u_i\} \neq \{v_1, v_2, \ldots, v_i\}$ for exactly two values of $i \in [5]$ ($i=1$ and $i=4$). On the other hand, suppose that $v$ were $(3, 2, 4, 1, 5)$. Then $\{u_1, u_2, \ldots, u_i\} \neq \{v_1, v_2, \ldots, v_i\}$ for exactly three values of $i \in [5]$ ($i = 1$, $i = 2$ and $i = 3$), so $u$ and $v$ would be adjacent in \textit{FJ}$(5,3)$ but not in \textit{FJ}$(5,2)$. \\
\\
In keeping with this new notation, for a permutation $u=(u_1, u_2, \ldots, u_n)$, denote by $u(i)$ the set $\{u_1, u_2, \ldots, u_i\}$ and the empty set $\emptyset$ for $i \in [n]$ and $i = 0$, respectively. Some algebraic rules for these sets are easily established. Since the elements of the sequence $u$ are distinct, for all $x$ and $y$ such that $0 \leq x < y \leq n$, we have $u(y)-u(x)=\{u_{x+1}, u_{x+2}, \ldots, u_{y-1}, u_y\}$. Thus, if $u(x)=v(x)$ and $u(y)=v(y)$, then clearly $u(y)-u(x)=v(y)-v(x)$. Similarly, it is also easily seen that if $u(x)=v(x)$ but $u(y)\neq v(y)$, then $u(y)-u(x) \neq v(y)-v(x)$. 

\begin{exmp}[Trivial Full-Flag Johnson Graph]
\end{exmp}
\noindent
As an initial example, consider the case when $k=0$. For every positive integer $n$, the vertices of \textit{FJ}$(n,0)$ are permutations of $(n)$, and two vertices $u=(u_1, u_2,  \ldots, u_n)$ and $v=(v_1, v_2, \ldots, v_n)$ are adjacent if and only if $u(i) \neq v(i)$ for no integers $i \in [n]$; that is, $u(i) = v(i)$ for every $i \in [n]$. It is easy to see that this implies $u=v$. Indeed, for arbitrary $i \in [n]$, we know that $u(i-1)=v(i-1)$ and $u(i)=v(i)$. Hence $u_i=v_i$. As this holds for all $i \in [n]$, we have $u=v$. Thus:

\begin{lem}
The graph  \textit{FJ}$(n,0)$ is the isolated graph, in which each vertex is adjacent only to itself. 
\end{lem}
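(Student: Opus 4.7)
The plan is to unpack the adjacency condition for $FJ(n,0)$ and show that it collapses to the identity. Two vertices $u$ and $v$ are adjacent in $FJ(n,k)$ precisely when $u(i) \neq v(i)$ for exactly $k$ indices $i \in [n]$. Setting $k = 0$, adjacency requires that $u(i) = v(i)$ for every $i \in [n]$.

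Next, I would argue that this condition forces $u = v$ as sequences. Fix any $i \in [n]$; the hypothesis gives $u(i-1) = v(i-1)$ and $u(i) = v(i)$, where we use the convention $u(0) = v(0) = \emptyset$ when $i = 1$. Using the singleton-difference property $u(i) - u(i-1) = \{u_i\}$ noted in the excerpt, and the analogous identity for $v$, we obtain $\{u_i\} = u(i) - u(i-1) = v(i) - v(i-1) = \{v_i\}$, hence $u_i = v_i$. Since $i$ was arbitrary, $u = v$.

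Finally, I would conclude that the only edges in $FJ(n,0)$ are loops: each permutation is adjacent only to itself, so $FJ(n,0)$ is the isolated graph on $n!$ vertices as claimed. The main step is really just the singleton-difference observation, which has already been recorded in the algebraic rules preceding the lemma, so there is no substantive obstacle — the proof is essentially a direct translation of the definition.
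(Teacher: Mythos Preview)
Your argument is correct and essentially identical to the paper's: both unpack the $k=0$ adjacency condition to $u(i)=v(i)$ for all $i$, and then use $u(i-1)=v(i-1)$ together with $u(i)=v(i)$ to deduce $u_i=v_i$ for each $i$. You have simply made the singleton-difference step a bit more explicit than the paper does.
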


\noindent
We use the adjective ``non-trivial" to describe Full-Flag Johnson graphs \textit{FJ}$(n,k)$ with $k > 0$.

\begin{exmp}[Permutahedron]
\end{exmp}
 \noindent
Consider the case when $k=1$. Two vertices $u$ and $v$ are adjacent in \textit{FJ}$(n,1)$ if and only if $u(i) \neq v(i)$ for exactly one $i \in [n]$. This implies that $u(x) = v(x)$ for each positive integer $x \neq i$, and in particular every positive integer $x < i$. By the argument of Example 2.1, we then have $u_1=v_1, u_2=v_2, \ldots, u_{i-1}=v_{i-1}$. Furthermore, $u(i-1)=v(i-1)$ but $u(i) \neq v(i)$, implying $u_i \neq v_i$. Thus, the first $i-1$ elements of the sequence $u$ are equal to the first $i-1$ elements of the sequence $v$, respectively, and the $i$th elements of $u$ and $v$ differ. \\
\\
Now, it can not be that $i=n$, since $u(n) = v(n) = \{1,2, \ldots, n\}$. Hence $i < n$, and $u(i+1) = v(i+1)$. Since $u(i-1) = v(i-1)$, we then have $\{u_i, u_{i+1}\}=\{v_i, v_{i+1}\}$. But $u_i \neq v_i$, so it must be that $v_i=u_{i+1}$ and $v_{i+1}=u_i$. \\
\\
Finally, we know that $u(x) = v(x)$ for all $x > i$. Again by the argument of Example 2.1, $u_{i+1}=v_{i+1}, u_{i+2}=v_{i+2}, \ldots, u_{n}=v_{n}$. Thus $u$ and $v$ are of the form $(u_1, u_2,  \ldots, u_i, u_{i+1}, \ldots, u_n)$ and $(u_1, u_2,  \ldots, u_{i+1}, u_i, \ldots, u_n)$, respectively; that is, they are related by a neighboring transposition. Conversely, it is easily seen that two vertices related by a neighboring transposition are indeed adjacent in \textit{FJ}$(n,1)$. Hence:

\begin{lem}
Two vertices in \textit{FJ}$(n,1)$ are adjacent if and only if they are related by a neighboring transposition. 
\end{lem}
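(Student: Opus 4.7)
The plan is to prove both directions of the equivalence by leveraging the observation from Example 2.1 that the permutation $u$ is recoverable from its prefix sets via the telescoping identity $\{u_j\} = u(j) \setminus u(j-1)$. This turns the adjacency condition (a statement about sets) into a statement about individual entries.

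For the forward direction, suppose $u$ and $v$ are adjacent in \textit{FJ}$(n,1)$, so there is a unique index $i \in [n]$ with $u(i) \neq v(i)$. First I would dispose of the edge case $i = n$, which cannot occur since $u(n) = v(n) = [n]$ always. Next, because $u(j) = v(j)$ for every $j \neq i$ (including $j = 0$, with $u(0) = v(0) = \emptyset$), the telescoping identity forces $u_j = v_j$ for all $j \in [n] \setminus \{i, i+1\}$; only positions $i$ and $i+1$ can still differ. To handle these, I would combine two pieces of information: $u(i-1) = v(i-1)$ together with $u(i) \neq v(i)$ gives $u_i \neq v_i$, while $u(i-1) = v(i-1)$ together with $u(i+1) = v(i+1)$ forces $\{u_i, u_{i+1}\} = \{v_i, v_{i+1}\}$ as two-element sets. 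These two facts together determine $v_i = u_{i+1}$ and $v_{i+1} = u_i$, so $u$ and $v$ differ precisely by the neighboring transposition at positions $(i, i+1)$.

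For the converse, if $v$ is obtained from $u$ by swapping positions $i$ and $i+1$, then $u(j) = v(j)$ for all $j \neq i$ (trivially for $j < i$; for $j \geq i+1$ because the prefix sets of those sizes are determined by the same multiset of entries), while $u(i) \neq v(i)$ since $u_i \in u(i) \setminus v(i)$. Hence $u$ and $v$ are adjacent in \textit{FJ}$(n,1)$.

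There is no deep obstacle here; the work is mostly bookkeeping, namely correctly propagating equality of prefix sets into equality of individual entries on both sides of the distinguished index $i$. The one algebraic insight worth isolating is the two-element identity $\{u_i, u_{i+1}\} = \{v_i, v_{i+1}\}$, which is exactly the mechanism by which a single disagreement in the prefix-set sequence must be generated by swapping adjacent entries, and nothing more exotic.
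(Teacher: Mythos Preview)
Your proposal is correct and follows essentially the same route as the paper: both arguments rule out $i=n$, use the telescoping identity $\{u_j\}=u(j)\setminus u(j-1)$ to force $u_j=v_j$ away from positions $i$ and $i+1$, and then pin down the swap via the pair of observations $u_i\neq v_i$ and $\{u_i,u_{i+1}\}=\{v_i,v_{i+1}\}$. The only cosmetic difference is that the paper handles the ranges $j<i$ and $j>i+1$ in two separate passes, whereas you treat them uniformly.
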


\noindent
We thus see that for each positive integer $n$, the Full-Flag Johnson graph \textit{FJ}$(n,1)$ is in fact the permutahedron of order $n$. An example of $FJ(n,1)$ is given in Figure 1. \\

\begin{figure}[h!]
\centering
\includegraphics[scale=0.4]{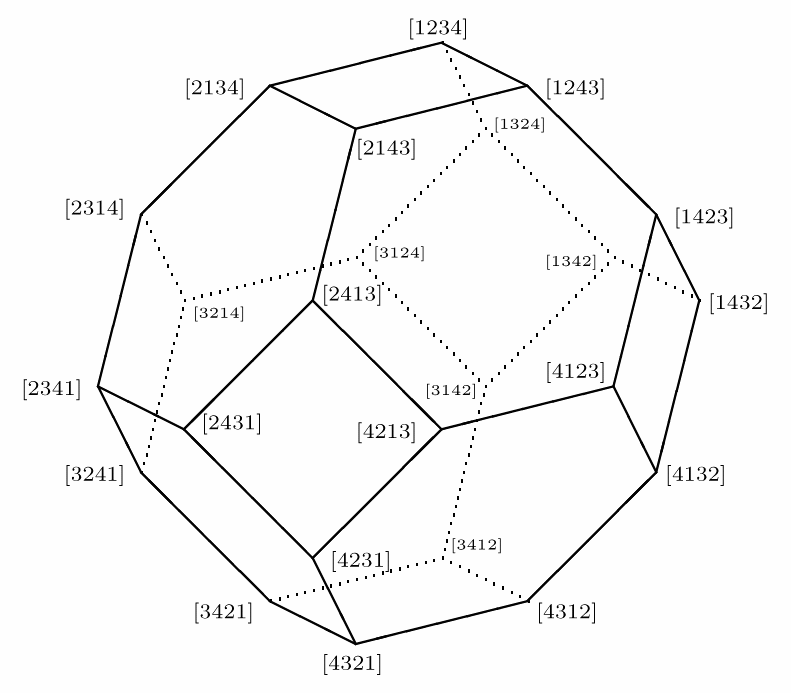}
\caption{The Full-Flag Johnson graph \textit{FJ}$(4,1)$.}
\end{figure}

\noindent
We close this section with a basic property of Full-Flag Johnson graphs.

\begin{thm} All non-trivial Full-Flag Johnson graphs are connected.
\end{thm}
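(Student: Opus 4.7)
My plan is to prove connectedness of $FJ(n,k)$ for $k \geq 1$ by showing that each edge of the permutahedron can be simulated by a short path in $FJ(n,k)$ and then using a natural $S_n$-symmetry to transport these paths everywhere. Since neighboring transpositions generate $S_n$, this will suffice.

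First I would verify that $S_n$ acts on $V(FJ(n,k))$ by $\pi \cdot (u_1, \ldots, u_n) = (\pi(u_1), \ldots, \pi(u_n))$, and that this action preserves adjacency: since $(\pi \cdot u)(i) = \pi(u(i))$ with $\pi$ a bijection, we have $(\pi \cdot u)(i) \neq (\pi \cdot v)(i)$ iff $u(i) \neq v(i)$. The action is also transitive, because left-multiplication by any $u$ sends the identity sequence $e = (1, 2, \ldots, n)$ to $u$. It therefore suffices to construct, for each $i \in [n-1]$, a path in $FJ(n,k)$ from $e$ to the sequence $\tau_i e = (1, \ldots, i-1, i+1, i, i+2, \ldots, n)$.

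For this, I plan to exhibit a single intermediate vertex $w$ satisfying $d(e, w) = d(\tau_i e, w) = k$, where I set $d(u, v) = |\{j : u(j) \neq v(j)\}|$; then $e \to w \to \tau_i e$ is the desired path (collapsing to a single edge when $k = 1$). A natural choice for $w$ is the permutation obtained from $e$ by reversing a block of $k+1$ consecutive positions that contains both $i$ and $i+1$: when $i \leq n - k$, reverse positions $i, i+1, \ldots, i+k$; otherwise, reverse positions $n-k, n-k+1, \ldots, n$. The technical heart is a direct prefix-by-prefix computation verifying that this $w$ differs from both $e$ and $\tau_i e$ in exactly $k$ prefixes.

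Once the path is in hand for every $i$, transporting it by $\pi = u$ produces, for every $u \in S_n$ and every $i$, a length-$2$ path in $FJ(n,k)$ between $u$ and the sequence obtained by swapping its $i$-th and $(i+1)$-th entries. Since neighboring transpositions generate $S_n$, any two vertices of $FJ(n,k)$ are then linked by a concatenation of such detours. The main obstacle will be the prefix bookkeeping in the boundary case $i > n - k$, where the reversed block must slide to avoid running past position $n$ and the prefix computation must be redone; the rest of the argument is a clean reduction to the connectedness of the permutahedron.
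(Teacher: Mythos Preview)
Your proposal is correct and follows essentially the same route as the paper: reduce via the relabeling symmetry to connecting $e$ with each $\tau_i e$, then exhibit a single intermediate vertex $w$ adjacent to both, with a case split according to whether a block of $k+1$ consecutive positions containing $i,i+1$ fits before position $n$. The only cosmetic difference is your choice of $w$ (a full reversal of that block) versus the paper's sparser $3$- or $4$-element shuffle; your choice actually streamlines the boundary case, since the same reversal anchored at positions $n-k,\ldots,n$ handles all $i>n-k$ without the paper's extra subcase $x+1=n$.
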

\begin{proof}
Let $n$ and $k$ be positive integers with $k < n$, so that \textit{FJ}$(n,k)$ is a non-trivial Full-Flag Johnson graph. For every two vertices $u$ and $v$, we show that there exists a path between $u$ and $v$ in \textit{FJ}$(n,k)$. Clearly, it suffices to show this property for vertices related by a neighboring transposition, since given two arbitrary permutations $u$ and $v$ we may form a sequence of permutations beginning with $u$ and ending with $v$ such that each pair of consecutive permutations is related by a neighboring transposition. \\
\\
By re-labeling elements, let $u=(1, 2, \ldots, x, x+1,\ldots, n)$ and $v=(1, 2, \ldots, x+1, x, \ldots, n)$, so that $v$ is related to $u$ via a transposition of the elements $x$ and $x+1$ where $1 \leq x < n$. We proceed via casework on $x$ and $k$. By Lemma 2.4, if $k=1$ then $u$ and $v$ are trivially connected (being adjacent). We thus assume that $k > 1$. Then: \\
\\
Case 1: When $x+k \leq n$: Write $u = (1, 2, \ldots, x, x+1, \ldots, x + k, \ldots, n)$, where $x$, $x+1$, and $x+k$ are distinct positive integers (since $k > 1$). Consider the sequence of three vertices: 
\begin{align*}
u &= (1, 2, \ldots, x, x+1, \ldots, x + k, \ldots, n), \\
w &= (1, 2, \ldots, x + k, x, \ldots, x + 1, \ldots, n)\text{, and} \\
v &= (1, 2, \ldots, x+1, x, \ldots, x + k, \ldots, n),
\end{align*}
where only elements $x$, $x+1$, and $x+k$ have changed position among $u$, $w$, and $v$ (i.e., $u_i=w_i=v_i=i$ for all $i \notin \{x, x+1, x+k\}$). We claim that $u$ and $w$ are adjacent in \textit{FJ}$(n,k)$, that is, we assert that $u(i) \neq w(i)$ for exactly $k$ integers $i \in [n]$. \\
\\
Clearly, $u(i) = w(i)=\{1, 2, \ldots, i\}$ for all positive integers $i < x$. For all $i$ such that $x \leq i < x + k$, we have $u(i) \neq w(i)$ since $x + k \in w(i)$ but $x + k \notin u(i)$. Furthermore, for all $i$ such that $x+k \leq i$, again $u(i) = w(i)=\{1, 2, \ldots, i\}$. Hence $u(i) \neq w(i)$ for exactly those $i$ such that $x \leq i < x + k$, of which there are $k$. Thus $u$ and $w$ are adjacent in \textit{FJ}$(n,k)$. It can similarly be seen that $w$ and $v$ are adjacent, whence $u$ and $v$ are connected, as desired. \\
\\
Case 2: When $n < x+k$: Since $1 \leq n - k < x$, we may write $u = (1, 2, \ldots, n-k, \ldots, x, x+1, \ldots, n)$ for distinct positive integers $n-k$, $x$, and $x+1$. Now, either $x+1 = n$ or $x+1 \neq n$. If $x+1 = n$, consider the sequence of three vertices: 
\begin{align*}
u &= (1, 2, \ldots, n-k, \ldots, x, n), \\
w &= (1, 2, \ldots, x, \ldots, n, n-k)\text{, and} \\
v &= (1, 2, \ldots, n-k, \ldots, n, x),
\end{align*}
where only elements $n-k$, $x$, and $x+1=n$ have changed position among $u$, $w$, and $v$. If $x+1 \neq n$, consider:
\begin{align*}
u &= (1, 2, \ldots, n-k, \ldots, x, x+1, \ldots, n), \\
w &= (1, 2, \ldots, n, \ldots, x+1, x, \ldots, n-k)\text{, and} \\
v &= (1, 2, \ldots, n-k, \ldots, x+1, x, \ldots, n),
\end{align*}
where only elements $n-k$, $x$, $x+1$, and $n$ have changed position. A similar argument to the one presented in Case 1 shows that $u$ is adjacent to $w$ and that $w$ is adjacent to $v$, whence $u$ is connected to $v$, as desired. \\
\\
Having shown that every two permutations that differ by a neighboring transposition are connected, we conclude our proof.
\end{proof}

\section{Combinatorial Interpretation}
\noindent
In Lemma 2.4, we showed that for $k=1$ the adjacency relation between two vertices $u$ and $v$ in \textit{FJ}$(n, 1)$ could be stated in terms of $v$ being a particular permutation (specifically, a neighboring transposition) of $u$. It is natural to ask whether the adjacency between $u$ and $v$ can be similarly formulated for $u$ and $v$ in \textit{FJ}$(n,k)$ for $k > 1$. That is, let $u=(u_1, u_2,  \ldots, u_n)$ and $v=(v_1, v_2, \ldots, v_n)$ be adjacent in \textit{FJ}$(n,k)$. We then ask: how are $u$ and $v$ related in terms of one being a permutation of the other? \\
\\
We know that $u(i) \neq v(i)$ for exactly $k$ indices $i \in [n]$. Take the complement of this set of indices in $[n]$; that is, consider the set of positive indices $i$ such that $u(i) = v(i)$. It is clear that this has cardinality $n-k$; enumerate these $n-k$ integers in ascending order and consider an arbitrary pair of successive integers in this sequence, say $x$ and $y$. Since $u(x) = v(x)$ and $u(y) = v(y)$, we then have $u(y)-u(x) = v(y) - v(x)$, that is, $\{u_{x+1}, u_{x+2}, \ldots, u_{y}\} = \{v_{x+1}, v_{x+2}, \ldots, v_{y}\}$. Thus, the subsequence of elements $(v_{x+1}, v_{x+2}, \ldots, v_{y})$ from $v$ is a permutation of the subsequence of elements $(u_{x+1}, u_{x+2}, \ldots, u_{y})$ from $u$. \\
\\
Now we ask: is it possible that there is an integer $i$ with $x+1 \leq i < y$ such that the subsequence $(v_{x+1}, v_{x+2}, \ldots, v_i)$ is a permutation of the subsequence $(u_{x+1}, u_{x+2}, \ldots, u_i)$? We assert that the answer is ``no". For if there were, it is easily seen that we would have $u(i) = v(i)$, contradicting the fact that $x$ and $y$ are successive integers in our ordering of such indices. We thus say that $(v_{x+1}, v_{x+2}, \ldots, v_{y})$ is an irreducible permutation of $(u_{x+1}, u_{x+2}, \ldots, u_{y})$; that is, there is no integer $i$ with $x+1 \leq i < y$ such that the subsequences $(u_{x+1}, u_{x+2}, \ldots, u_i)$ and $(v_{x+1}, v_{x+2}, \ldots, v_i)$ are permutations of each other. \\
\\
We have thus established the following. Let $u=(u_1, u_2,  \ldots, u_n)$ and $v=(v_1, v_2, \ldots, v_n)$ be two adjacent vertices in \textit{FJ}$(n,k)$. Then there are exactly $n-k$ indices $i$, enumerated in ascending order as $x < y < z < \ldots < n$, such that $u(i)=v(i)$. Noting that the last integer in this ordering is $n$ (since $u(n)=v(n)$ for any $u$ and $v$), we may partition the index set $\{1, 2, \ldots, n\}$ into $\{1, 2, \ldots, x\} \cup \{x+1, x+2, \ldots, y\} \cup \{y+1, y+2, \ldots, z\} \cup \cdots$ so that the subsequences of $u$ and $v$ corresponding to any one partition are irreducible permutations of each other. That is, contiguous $(u_1, u_2, \ldots, u_x)$ and $(v_1, v_2, \ldots, v_x)$, $(u_{x+1}, u_{x+2}, \ldots, u_y)$ and $(v_{x+1}, v_{x+2}, \ldots, v_y)$, $(u_{y+1}, u_{y+2}, \ldots, u_z)$ and $(v_{y+1}, v_{y+2}, \ldots, v_z)$, $\ldots$ are irreducible permutations of each other. \\
\\
In general, if $v$ is a permutation of $u$ that can be decomposed into $N$ irreducible permutations in this manner, we say that $v$ is an $N$-reducible permutation of $u$. In our case, $v$ is an $(n-k)$-reducible permutation of $u$. Conversely, it is easily seen that if $v$ is an $(n-k)$-reducible permutation of $u$, then $u$ and $v$ are indeed adjacent in \textit{FJ}$(n,k)$. Hence:
\begin{lem}
Two vertices in \textit{FJ}$(n,k)$ are adjacent if and only if one is an $(n-k)$-reducible permutation of the other.
\end{lem}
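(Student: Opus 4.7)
The lemma is essentially a formal restatement of the discussion immediately preceding it, so my plan is to organize that discussion into a clean two-direction argument and verify the one part that is not yet fully spelled out (the converse).

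For the forward direction, assume $u$ and $v$ are adjacent in $FJ(n,k)$. I would let $S = \{i \in [n] : u(i) = v(i)\}$, which by definition of adjacency has $|S| = n-k$. Note that $n \in S$ always. Enumerate $S$ in ascending order as $i_1 < i_2 < \cdots < i_{n-k} = n$, and set $i_0 = 0$. On each block $[i_{j-1}+1, i_j]$, the identities $u(i_{j-1}) = v(i_{j-1})$ and $u(i_j) = v(i_j)$ (together with the algebraic rules at the end of Section 2) give $\{u_{i_{j-1}+1}, \ldots, u_{i_j}\} = \{v_{i_{j-1}+1}, \ldots, v_{i_j}\}$, so the two subsequences are permutations of each other. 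Irreducibility of this block then follows because any intermediate $i$ with $i_{j-1} < i < i_j$ for which the initial sub-subsequences matched as multisets would force $u(i) = v(i)$, contradicting maximality of $S$ and that $i_{j-1}, i_j$ are consecutive in $S$.

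For the converse, suppose $v$ is an $(n-k)$-reducible permutation of $u$ with block endpoints $i_1 < i_2 < \cdots < i_{n-k} = n$. I need to show that the set of indices $i \in [n]$ with $u(i) = v(i)$ has cardinality exactly $n-k$. At each block endpoint $i_j$, the first $i_j$ entries of $u$ and $v$ coincide as multisets (since each block is a permutation and all preceding blocks are permutations of each other), so $u(i_j) = v(i_j)$; this accounts for at least $n-k$ matching indices. For the other direction, I must check that no $i$ strictly inside a block satisfies $u(i) = v(i)$: if $i_{j-1} < i < i_j$ and $u(i) = v(i)$, then combined with $u(i_{j-1}) = v(i_{j-1})$ this would force $(u_{i_{j-1}+1}, \ldots, u_i)$ and $(v_{i_{j-1}+1}, \ldots, v_i)$ to be permutations of each other, contradicting irreducibility of the $j$th block. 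Hence exactly $n-k$ indices satisfy $u(i) = v(i)$, and $u, v$ are adjacent in $FJ(n,k)$.

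The argument is almost entirely bookkeeping on top of the algebraic rules already laid out in Section 2 (specifically the two observations that $u(x) = v(x)$ and $u(y) = v(y)$ imply $u(y) - u(x) = v(y) - v(x)$, and that $u(x) = v(x)$ with $u(y) \neq v(y)$ implies $u(y) - u(x) \neq v(y) - v(x)$). The mildly subtle step — and the only place a reader could get confused — is the converse's internal check that irreducibility rules out extra matches strictly between block boundaries; I would state that step explicitly rather than leaving it implicit. No other obstacle is anticipated, and the whole proof should fit in a short paragraph once the notation $i_0 = 0 < i_1 < \cdots < i_{n-k} = n$ is fixed.
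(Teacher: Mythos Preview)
Your proposal is correct and follows essentially the same approach as the paper: the forward direction is exactly the argument in the discussion preceding the lemma (enumerate the indices where $u(i)=v(i)$, use the set-difference rules to see each block is a permutation, and use consecutiveness to deduce irreducibility), and your converse merely spells out what the paper dismisses as ``easily seen.'' The only addition you make beyond the paper is the explicit check that irreducibility of each block forbids extra matches strictly between block boundaries, which is indeed the right detail to include.
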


\noindent
As an illustration of the above lemma, consider the vertices $u = (1, 2, 3, 4, 5, 6, 7)$ and $v = (2, 3, 1, 4, 6, 7, 5)$. It is easily seen that $u$ and $v$ are adjacent in \textit{FJ}$(7,4)$, with $u(i) \neq v(i)$ for $i \in \{1, 2, 5, 6\}$. The  complement of this set in $[7]$ is $\{3, 4, 7\}$. Adding spaces for clarity,
\begin{center}
$u = ( \ \ 1, 2, 3, \ \ 4, \ \ 5, 6, 7 \ \ )$ \\
$v = ( \ \ 2, 3, 1, \ \ 4, \ \ 6, 7, 5 \ \ )$
\end{center}
we see that the 1st through 3rd elements of $u$ and $v$ are irreducible permutations of each other, as are the (3+1)st through 4th (i.e., the fourth element), and the (4+1)st through 7th. \\
\\
We now introduce a slightly different way to view irreducible permutations. A reducible permutation matrix is a permutation matrix that can be decomposed into non-empty submatrices along its main diagonal that are also permutation matrices.  For example, the following $3 \times 3$ permutation matrix is reducible:
\[\left[ \begin{array}{ccc}
0 & 1 & 0 \\
1 & 0 & 0 \\
0 & 0 & 1 \end{array} \right]\]
since it may be decomposed into the $2 \times 2$ permutation matrix in the upper-left corner and the $1 \times 1$ permutation matrix in the lower-right corner. As another example, the $n \times n$ identity matrix can be decomposed into $n$ singleton submatrices along its main diagonal. A permutation matrix that is not reducible is said to be irreducible. An example of such a matrix is:
\[ \left[ \begin{array}{ccc}
0 & 0 & 1 \\
1 & 0 & 0 \\
0 & 1 & 0 \end{array} \right]\]
and it is clear that such matrices may be identified with the irreducible permutations discussed earlier. \\
\\
Similarly, for a positive integer $N$, an $N$-block diagonal permutation matrix is a permutation matrix that can be decomposed into exactly $N$ irreducible permutation submatrices along its main diagonal. The first example we gave above is two-block diagonal; the $n \times n$ identity matrix is $n$-block diagonal. Any irreducible permutation matrix is one-block diagonal (i.e., it can not be decomposed further). It is clear that each $N$-block diagonal permutation matrix corresponds to an $N$-reducible permutation, as discussed earlier (and vice-versa). We thus have:
\begin{thm} The Full-Flag Johnson graph \textit{FJ}$(n,k)$ is the Cayley graph on $S_n$ generated by the set of all $n \times n$ $(n-k)$-block diagonal permutation matrices.
\end{thm}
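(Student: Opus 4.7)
The plan is to combine Lemma 3.1 with a clean correspondence between $(n-k)$-reducible permutations of sequences and $(n-k)$-block diagonal permutation matrices, acting on $S_n$ by right multiplication. First, I would identify the vertex set $V(\textit{FJ}(n,k))$ with $S_n$ via the bijection that sends $u = (u_1, \ldots, u_n)$ to the permutation $i \mapsto u_i$, as set up in Section 2. Under this identification, for any $\sigma \in S_n$ with matrix $M_\sigma$, the product $u \cdot M_\sigma$ corresponds to the sequence $(u_{\sigma(1)}, u_{\sigma(2)}, \ldots, u_{\sigma(n)})$, so right-multiplying by $M_\sigma$ permutes the positions of $u$ according to $\sigma$.

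Next, I would check that the candidate generating set $S$, the set of all $(n-k)$-block diagonal permutation matrices, satisfies the standard Cayley graph hypotheses. Symmetry is immediate: if $M_\sigma \in S$ has block structure $[n] = B_1 \sqcup B_2 \sqcup \cdots \sqcup B_{n-k}$ (consecutive intervals) and each $\sigma|_{B_j}$ is irreducible, then $M_\sigma^{-1} = M_{\sigma^{-1}}$ has the same block structure and each $\sigma^{-1}|_{B_j}$ is also irreducible. For $k > 0$, the identity matrix $I_n$ (which has $n$ singleton blocks, and cannot be grouped into fewer irreducible blocks without a block containing some identity submatrix of size $\geq 2$, which is reducible) is not in $S$, so no self-loops arise.

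The heart of the proof is the equivalence: $v = u \cdot M_\sigma$ for some $M_\sigma \in S$ if and only if $v$ is an $(n-k)$-reducible permutation of $u$. For the forward direction, the block structure of $M_\sigma$ gives a partition $[n] = B_1 \sqcup \cdots \sqcup B_{n-k}$ into consecutive intervals on which $\sigma$ acts irreducibly; applying $\sigma$ to the positions of $u$ rearranges each subsequence $u|_{B_j}$ into an irreducible permutation $v|_{B_j}$, which is precisely the definition of $v$ being an $(n-k)$-reducible permutation of $u$. The reverse direction is essentially the same observation read backwards: given an $(n-k)$-reducible decomposition of $v$ relative to $u$, one reads off the unique $\sigma$ that, on each block, sends the positions of $u$ to the positions of $v$; this $\sigma$ is block diagonal with respect to the given consecutive partition, and each block is irreducible by hypothesis, so $M_\sigma \in S$. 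Combining this with Lemma 3.1 finishes the proof.

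The main obstacle is purely bookkeeping: making sure the action convention ($v_i = u_{\sigma(i)}$ corresponds to right multiplication by $M_\sigma$) matches up correctly with the block-diagonal structure, and verifying carefully that the notion of irreducibility for permutation matrices agrees block-by-block with the notion of irreducible permutation of subsequences introduced in Section 3. No deep argument is needed beyond unpacking the definitions, but the conventions must be tracked consistently.
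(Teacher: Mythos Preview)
Your proposal is correct and follows essentially the same approach as the paper: the paper's proof is the single line ``The theorem follows immediately from Lemma 3.1,'' relying on the already-noted correspondence between $(n-k)$-block diagonal permutation matrices and $(n-k)$-reducible permutations. You have simply supplied the bookkeeping details (the action convention, symmetry of the generating set, exclusion of the identity for $k>0$) that the paper leaves implicit.
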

\begin{proof}
The theorem follows immediately from Lemma 3.1. 
\end{proof}

\noindent
Lemmas 2.2 and 2.4 follow immediately from Theorem 3.2, since the only $n \times n$ $n$-block diagonal permutation matrix is the $n \times n$ identity matrix, and every $n \times n$ $(n-1)$-block diagonal permutation matrix corresponds to a neighboring transposition. 

\section{Diameters of Full-Flag Johnson Graphs}
\noindent
We now give some results on the diameters of Full-Flag Johnson graphs, starting with the case when $k = 1$.  

\begin{thm} The diameter of \textit{FJ}$(n,1)$ is $n \choose 2$ for all $n \geq 2$.
\end{thm}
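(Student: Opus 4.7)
The plan is to identify the distance in $\textit{FJ}(n,1)$ with a standard combinatorial statistic and then extremize it. Since $\textit{FJ}(n,1)$ is the permutahedron by Lemma 2.4, the graph distance between two permutations $u$ and $v$ equals the minimum number of neighboring transpositions needed to transform $u$ into $v$. I would work with the inversion number $\mathrm{inv}(u,v)$ between $u$ and $v$, defined as the number of pairs $\{a,b\} \subseteq [n]$ whose relative order in $u$ differs from their relative order in $v$. The key observation is that a single neighboring transposition changes $\mathrm{inv}(u,v)$ by exactly $\pm 1$, since swapping two adjacent entries in $u$ flips the relative order of exactly one pair of values.

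For the lower bound, I would choose $u = (1,2,\ldots,n)$ and $v = (n, n-1, \ldots, 1)$. Since $\mathrm{inv}(u,v) = \binom{n}{2}$ (every pair is inverted) and each edge along any path between $u$ and $v$ changes the inversion count by at most $1$, any path has length at least $\binom{n}{2}$. Therefore $d(u,v) \geq \binom{n}{2}$, so $\mathrm{diam}(\textit{FJ}(n,1)) \geq \binom{n}{2}$.

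For the upper bound, I would exhibit an explicit sorting procedure showing that any two permutations can be connected by at most $\binom{n}{2}$ neighboring transpositions. The cleanest way is a bubble-sort-style argument: given $u$ and $v$, write $w = v \circ u^{-1}$ and reduce to showing that any permutation $w \in S_n$ can be reached from the identity in at most $\binom{n}{2}$ neighboring transpositions. Repeatedly locate the entry $n$ in the current sequence and move it rightward via neighboring transpositions until it occupies the last position (at most $n-1$ swaps); then recurse on the first $n-1$ entries. This gives the bound $(n-1) + (n-2) + \cdots + 1 = \binom{n}{2}$, so $d(u,v) \leq \binom{n}{2}$ for any $u,v$.

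Combining the two bounds yields $\mathrm{diam}(\textit{FJ}(n,1)) = \binom{n}{2}$. The only subtle step is verifying that a neighboring transposition changes the inversion count by exactly one and never by more — this is immediate once one notes that swapping adjacent positions affects the relative order of exactly one unordered pair of values, leaving all other pairs untouched. No genuine obstacle arises; the main care is simply to phrase the inversion invariant so that it applies to pairs of arbitrary permutations rather than just to a single permutation relative to the identity.
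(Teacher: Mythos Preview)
Your proposal is correct and follows essentially the same approach as the paper: both prove the upper bound by a bubble-sort/selection-sort argument summing $(n-1)+(n-2)+\cdots+1$, and both prove the lower bound via an inversion (``disorder'') count that changes by exactly $\pm 1$ under a neighboring transposition, applied to the pair $(1,2,\ldots,n)$ and $(n,n-1,\ldots,1)$. The only cosmetic differences are that the paper moves the smallest element leftward first while you move the largest rightward, and the paper phrases the disorder function relative to the identity rather than between two permutations.
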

\begin{proof}
Let $n \geq 2$ and consider the Full-Flag Johnson graph \textit{FJ}$(n,1)$. Choose two arbitrary vertices $u$ and $v$. Without loss of generality, assume that  $u = (u_1, u_2, \ldots, u_n)$ and $v = (1, 2, \ldots, n)$. We show that there exists a path from $u$ to $v$ in \textit{FJ}$(n,1)$ of at most $n \choose 2$ edges. Indeed, let $u_i$ be the element of $u$ that is equal to 1, where $i \in [n]$, so that $u = (u_1, u_2, \ldots, u_{i-2}, u_{i-1}, 1, \ldots, u_n)$. Now consider:
\begin{align*}
&(u_1, u_2, \ldots, u_{i-2}, u_{i-1}, 1, \ldots, u_n), \\
&(u_1, u_2, \ldots, u_{i-2}, 1, u_{i-1}, \ldots, u_n), \\
&(u_1, u_2, \ldots, 1, u_{i-2}, u_{i-1}, \ldots, u_n), \\
&\ \ \ \ \ \ \ \ \ \ \ \ \ \ \ \ \ \ \ \ \ \ \vdots \\
&(1, u_1, \ldots, u_{i-3}, u_{i-2}, u_{i-1}, \ldots, u_n),
\end{align*}
that is, the sequence of vertices (beginning with $u$) in which the element 1 is gradually moved to the position of index one via successive neighboring transpositions to the left. This forms a path, since each pair of consecutive vertices are adjacent by Lemma 2.4. Since $i \leq n$, this path has at most $n-1$ edges. \\
\\
Now move the element 2 in the end vertex of this path to the position of index two via a similar sequence of successive neighboring transpositions. Since we are moving an element of maximal index $n$ to the position of index two, at most $n-2$ transpositions are needed. Thus, this new extra portion of the path has at most $n-2$ edges. Continuing on in this manner, we move the element 3 to the position of index three, which takes at most $n-3$ edges, and so on until we reach $v = (1, 2, \ldots, n)$. This takes at most $(n-1) + (n-2) + \cdots + 1 =$  $ n \choose 2$ edges in total. \\
\\
We now show that there exist two vertices $u$ and $v$ for which every joining path contains at least $n \choose 2$ edges. Let $u = (n, n-1, \ldots, 1)$ and $v = (1, 2, \ldots, n)$. Our assertion is easily proven using the disorder function $f$. Given a permutation $w = (w_1, w_2, \ldots, w_n)$ of $(n)$, define $f(w)$ to be the number of index-pairs $(i, j)$ with $1 \leq i < j \leq n$ such that $u_i > u_j$. In this case, $f(u) =$ $n \choose 2$ and $f(v) = 0$. But applying a neighboring transposition to $u$ clearly changes the value of $f$ by $\pm 1$, so at least $n \choose 2$ neighboring transpositions are needed. Thus every path from $u$ to $v$ has at least $n \choose 2$ edges. 
\end{proof}
\noindent
We now consider the opposite extreme when $k = n-1$. The $n = 2$ case is then trivial, so we assume $n \geq 3$.
\begin{thm} The diameter of \textit{FJ}$(n,n-1)$ is $2$ for all $n \geq 3$.
\end{thm}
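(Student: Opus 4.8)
The plan is to prove both inequalities $\operatorname{diam}(\textit{FJ}(n,n-1))\geq 2$ and $\operatorname{diam}(\textit{FJ}(n,n-1))\leq 2$, after two cheap reductions. First, $\textit{FJ}(n,n-1)$ is non-trivial (here $k=n-1\geq 2$), so it is connected by Theorem 2.5 and its diameter is finite. Second, I would rewrite the adjacency relation: since $u(n)=v(n)=[n]$ always, two vertices $u,v$ can disagree at only $n-1$ levels precisely when they disagree at \emph{all} of the levels $1,2,\ldots,n-1$; so $u\sim v$ in $\textit{FJ}(n,n-1)$ iff $u(i)\neq v(i)$ for every $i\in[n-1]$. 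For the lower bound it then suffices to exhibit one distinct non-adjacent pair, and I would take $u=(1,2,\ldots,n)$ and $v=(2,1,3,4,\ldots,n)$: here $u(1)\neq v(1)$ but $u(i)=v(i)$ for all $i\geq 2$, and since $n\geq 3$ the level $i=2$ lies in $[n-1]$, so $u\not\sim v$ and $d(u,v)\geq 2$.

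For the upper bound I would show that any two distinct non-adjacent vertices $u,v$ admit a common neighbour $w$, constructed as a full flag by fixing its levels $w(n)\supset w(n-1)\supset\cdots\supset w(1)\supset w(0)$ from the top down; the only requirement is $w(i)\notin\{u(i),v(i)\}$ for $i\in[n-1]$. The key preliminary remark is that non-adjacency forces $u_1\neq v_n$: if $u_1=v_n$, then for every $i\in[n-1]$ we have $u_1\in u(i)$ while $u_1=v_n\notin v(i)$, so $u(i)\neq v(i)$ for all such $i$ and $u$ would already be adjacent to $v$. Accordingly I would first set $w_n:=u_1$; then $w(n-1)=[n]\setminus\{u_1\}$ differs from $u(n-1)=[n]\setminus\{u_n\}$ (since $u_1\neq u_n$) and from $v(n-1)=[n]\setminus\{v_n\}$ (since $u_1\neq v_n$). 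Next I would fill positions $n-1,n-2,\ldots,3$ greedily: when choosing $w_m$ there are exactly $m$ elements still unused, distinct choices of $w_m$ give distinct sets $w(m-1)$, and so at most one choice yields $w(m-1)=u(m-1)$ and at most one yields $w(m-1)=v(m-1)$; since $m\geq 3$ an admissible value always remains (this loop is vacuous when $n=3$). Finally only two elements $\{a,b\}$ are left for positions $1,2$; because $u_1=w_n$ is already spent we have $u_1\notin\{a,b\}$, so I would pick $w_1\in\{a,b\}\setminus\{v_1\}$ (non-empty, as $|\{a,b\}|=2$) and let $w_2$ be the other element, so that $w(1)=\{w_1\}$ with $w_1\neq u_1$ and $w_1\neq v_1$.

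It then remains to verify that every level $i\in[n-1]$ of $w$ avoids $u(i)$ and $v(i)$: level $n-1$ is taken care of by the choice of $w_n$; levels $n-2,\ldots,2$ are exactly the sets $w(m-1)$ produced during the greedy stage (and when $n=3$ the level $2$ coincides with level $n-1$, already handled); and level $1$ is handled by the final step. Hence $w$ is adjacent to both $u$ and $v$, giving $d(u,v)\leq 2$, and together with the lower bound $\operatorname{diam}(\textit{FJ}(n,n-1))=2$. The one genuinely delicate point is the bottom two levels, where the greedy choice has no slack — two available candidate sets against up to two forbidden ones — and the trick that resolves it is spending $u_1$ at position $n$ at the outset, which is precisely what the hypothesis $u_1\neq v_n$ (a consequence of non-adjacency) makes legitimate; all the intermediate levels are comfortable since $\binom{n}{i}\geq n\geq 3$.
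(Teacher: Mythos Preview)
Your proof is correct, and your lower-bound example coincides with the paper's. For the upper bound, however, the paper takes a quite different (and shorter) route: it first uses vertex-transitivity to reduce to $v=(1,2,\ldots,n)$, and then writes down the intermediate vertex \emph{explicitly} in closed form depending only on $u_n$. If $u_n=1$ then $u$ and $v$ are already adjacent; otherwise $w=(u_n,u_n+1,\ldots,n,1,2,\ldots,u_n-1)$ works, and the two adjacencies $u\sim w$ and $w\sim v$ are checked by tracking the single elements $u_n$ and $u_n-1$ across the levels. Your argument instead keeps $u$ and $v$ general and builds $w$ greedily from the top level down, using the counting observation that at each intermediate level at most two choices are forbidden while at least three remain; the genuinely non-trivial bottom two levels are rescued by your nice preliminary remark that non-adjacency forces $u_1\neq v_n$, allowing you to spend $u_1$ as $w_n$ and thereby guarantee slack at level~$1$. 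The paper's explicit construction is slicker and gives a concrete path; your existence argument is a bit longer but avoids any symmetry reduction and showcases a technique (greedy level-by-level avoidance with a planted first move) that one could imagine adapting to other Cayley-graph diameter bounds.
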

\begin{proof}
Let $n \geq 3$ and consider the Full-Flag Johnson graph \textit{FJ}$(n,n-1)$. Choose two arbitrary vertices, $u$ and $v$. Without loss of generality, assume that $u = (u_1, u_2, \ldots, u_n)$ and $v = (1, 2, \ldots, n)$. We show that there exists a path from $u$ to $v$ in \textit{FJ}$(n, n-1)$ of at most two edges. Consider the possible values of $u_n$. Assume that $u_n = 1$. Then for all integers $i$ with $1 \leq i < n$, we have $1 \in v(i)$ but $1 \notin u(i)$. Since trivially $u(n) = v(n)$, this shows that $u(i) \neq v(i)$ for exactly $n-1$ values of $i \in [n]$, so $u$ and $v$ are adjacent in \textit{FJ}$(n,n-1)$. Now assume that $u_n > 1$. Consider the permutation:
\[
w = (u_n, u_n+1, \ldots, n, 1, 2, \ldots, u_n-1)
\]
formed by concatenating the arithmetic sequences $(u_n, u_n+1, \ldots, n)$ and $(1, 2, \ldots, u_n-1)$. (Note that if $u_n = n$, the first sequence consists of a single element.) Clearly $u$ is adjacent to $w$, since for all integers $i$ with $1 \leq i < n$ we have $u_n \in w(i)$ but $u_n \notin u(i)$. We also claim that $w$ is adjacent to $v$; that is, $w(i) \neq v(i)$ for all positive integers $i < n$. For every positive integer $i < u_n$, we have $u_n \in w(i)$ but $u_n \notin v(i)$ (since $v(i) = \{1, 2, \ldots, i\}$ and $i < u_n$), so $w(i) \neq v(i)$. On the other hand, for every positive integer $i$ such that $u_n \leq i < n$, we have $u_n-1 \in v(i)$ but $u_n-1 \notin w(i)$, so again $w(i) \neq v(i)$. Thus $w$ and $v$ are adjacent in \textit{FJ}$(n,n-1)$, as desired. This shows the sequence of vertices $(u, w, v)$ is a path of length two that connects $u$ and $v$. \\
\\
We now present two vertices $u$ and $v$ for which every joining path has at least two edges. Let $u = (1, 2, \ldots, n)$ and $v = (2, 1, \ldots, n)$. Clearly, $u(i) \neq v(i)$ for exactly one value of $i \in [n]$ (i.e., $i = 1$). Since $n-1 > 1$, $u$ and $v$ are not adjacent in \textit{FJ}$(n,n-1)$, and so every path connecting them must have at least two edges. 
\end{proof} 

\noindent
To obtain results on the diameter of \textit{FJ}$(n,k)$ in the general case for $0 \leq k < n$, we consider how adjacent vertices are related as permutations of one another. Let $u=(u_1, u_2,  \ldots, u_n)$ and $v=(v_1, v_2, \ldots, v_n)$ be two adjacent vertices in \textit{FJ}$(n,k)$. Then there are exactly $k$ indices $i \in [n]$ such that $u(i) \neq v(i)$. Denote this set of indices by $I$. \\
\\
Consider the extreme case when all of the elements of $I$ are consecutive, i.e., $I = \{x, x +1, x+2, \ldots, x+k-1\}$ for some $x \in [n]$ with $x + k -1 < n$. Then we have $u(i) = v(i)$ for all $i \in [n]$ with $i \leq x - 1$ or $x + k \leq i$. In particular, $u(x-1) = v(x-1)$ and $u(x+k) = v(x+k)$, so the subsequences $(u_{x}, u_{x+1}, \ldots, u_{x+k})$ of $u$ and $(v_{x}, v_{x+1}, \ldots, v_{x+k})$ of $v$ are permutations of each other. Furthermore, the argument of Example 2.1 shows that $u_i = v_i$ for all $i \in [n] - I$, so $u$ and $v$ differ by a permutation of $k+1$ consecutive elements. As in the proof of Theorem~4.1, this permutation can be decomposed into an application of at most $k+1 \choose 2$ neighboring transpositions. \\
\\
Generally, not all of the elements of $I$ are consecutive. However, we clam that $u$ and $v$ still differ by at most $\binom{k+1}{2}$ neighboring transpositions. Partition $I = I_1 \cup I_2 \cup \cdots \cup I_m$ such that, for every $j \in [m]$, $I_j$ is a maximal subset of $I$ consisting of consecutive indices of $I$. Applying an argument to the index set $I$ as in the above extreme case, the vertices $u$ and $v$ differ by at most $\binom{|I_1|+1}{2} + \binom{|I_2|+1}{2} + \cdots + \binom{|I_m|+1}{2}$ neighboring transpositions. By noting that $\sum_{j=1}^m |I_j| = |I| = k$, we can derive the inequality $\sum_{j=1}^m \binom{|I_j| + 1}{2} \leq \binom{|I| + 1}{2} = \binom{k+1}{2}$. We thus have:

\begin{lem} Every two vertices that are adjacent in \textit{FJ}$(n,k)$ differ by at most $k+1 \choose 2$ neighboring transpositions.
\end{lem}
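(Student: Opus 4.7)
My plan is to follow exactly the outline the authors sketch in the paragraph preceding the lemma statement, filling in the three steps they gesture at. Let $u$ and $v$ be adjacent in \textit{FJ}$(n,k)$, and let $I \subseteq [n]$ be the set of $k$ indices where $u(i) \neq v(i)$. The strategy is to (1) reduce to the ``consecutive'' case where $I = \{x, x+1, \ldots, x+k-1\}$, (2) handle that case by invoking Theorem 4.1 on a smaller permutation, and (3) glue the pieces back together using a convexity-style inequality on binomial coefficients.

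For step (1), I would decompose $I = I_1 \cup \cdots \cup I_m$ into maximal blocks of consecutive integers. Between consecutive blocks there is an index $i \in [n]\setminus I$ with $u(i)=v(i)$, which by the algebraic rules for $u(\cdot)$ sets developed in Section 2 forces the subsequences of $u$ and $v$ on that stretch to be permutations of each other, and forces $u_j=v_j$ for each $j$ not adjacent to some element of $I$. Thus the problem separates into $m$ independent subproblems, one per block $I_j$.

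For step (2), fix a block $I_j = \{x, x+1, \ldots, x+|I_j|-1\}$. Because $u(x-1)=v(x-1)$ and $u(x+|I_j|)=v(x+|I_j|)$, the subsequences $(u_x, u_{x+1}, \ldots, u_{x+|I_j|})$ and $(v_x, v_{x+1}, \ldots, v_{x+|I_j|})$ are permutations of the same $|I_j|+1$ element set, and $u$ and $v$ agree on the remaining indices. Thus locally $u$ and $v$ differ by a permutation on $|I_j|+1$ symbols, which by the upper bound half of the proof of Theorem 4.1 can be realized by at most $\binom{|I_j|+1}{2}$ neighboring transpositions.

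For step (3), summing over the blocks gives a bound of $\sum_{j=1}^m \binom{|I_j|+1}{2}$. Since $\sum_j |I_j|=k$ and each $|I_j| \geq 1$, I need the inequality
\[
\sum_{j=1}^{m} \binom{|I_j|+1}{2} \;\leq\; \binom{k+1}{2}.
\]
This follows by induction on $m$ from the two-term case $\binom{a+1}{2} + \binom{b+1}{2} \leq \binom{a+b+1}{2}$ for positive integers $a,b$, whose difference equals $ab \geq 0$ by a direct expansion. I do not expect any real obstacle here; the only delicate point is to make sure step (2) is stated cleanly, since Theorem 4.1's upper bound was phrased for \textit{FJ}$(n,1)$ on the full index set $[n]$ and I am applying it to a contiguous sub-block, so I would either re-invoke its construction verbatim on the relabelled length-$(|I_j|+1)$ segment or simply cite it after noting the obvious relabelling.
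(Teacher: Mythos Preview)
Your proposal is correct and follows essentially the same argument as the paper, which is unsurprising since you explicitly set out to flesh out the outline given in the paragraphs preceding the lemma. The only addition is that you supply a proof of the inequality $\sum_{j=1}^m \binom{|I_j|+1}{2} \leq \binom{k+1}{2}$ via the two-term case $\binom{a+1}{2}+\binom{b+1}{2}\leq\binom{a+b+1}{2}$ and induction, whereas the paper simply asserts it; this is a welcome clarification but not a departure in approach.
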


\noindent
We use this result to prove a lower bound on the diameter of Full-Flag Johnson graphs in the general case.

\begin{thm} The diameter of every non-trivial Full-Flag Johnson graph \textit{FJ}$(n,k)$ is bounded below by $\binom{n}{2}/\binom{k+1}{2}$.
\end{thm}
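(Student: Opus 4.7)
The plan is to combine Lemma 4.3 with the disorder function argument already used in the proof of Theorem 4.1. The idea is to pick a specific pair of vertices that are provably far apart in the permutahedron $FJ(n,1)$, and then show that any shorter path in $FJ(n,k)$ would contradict that distance lower bound, because each $FJ(n,k)$-edge can be simulated by few neighboring transpositions.

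Concretely, I would take $u = (n, n-1, \ldots, 1)$ and $v = (1, 2, \ldots, n)$, the same extremal pair used in Theorem 4.1. Let $d$ denote the distance from $u$ to $v$ in $FJ(n,k)$, and let $u = w_0, w_1, \ldots, w_d = v$ be a geodesic path. Since each consecutive pair $w_{i-1}, w_i$ is adjacent in $FJ(n,k)$, Lemma 4.3 tells us $w_{i-1}$ and $w_i$ differ by at most $\binom{k+1}{2}$ neighboring transpositions. Concatenating these sequences of neighboring transpositions, we obtain a sequence of at most $d \cdot \binom{k+1}{2}$ neighboring transpositions transforming $u$ into $v$.

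Now I invoke the disorder function $f$ from the proof of Theorem 4.1, which satisfies $f(u) = \binom{n}{2}$ and $f(v) = 0$, and changes by exactly $\pm 1$ under any neighboring transposition. Thus any sequence of neighboring transpositions carrying $u$ to $v$ must have length at least $\binom{n}{2}$. Combining this with the upper bound gives $d \cdot \binom{k+1}{2} \geq \binom{n}{2}$, so $d \geq \binom{n}{2} / \binom{k+1}{2}$, and since the diameter is at least $d$, the stated lower bound follows.

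The only conceptual care needed is in the concatenation step: I must be sure that the ``at most $\binom{k+1}{2}$ neighboring transpositions'' produced for each edge of the $FJ(n,k)$-path can indeed be chosen so that their composition (applied to $w_{i-1}$) equals $w_i$, not merely a permutation in the same equivalence class. This follows directly from the way Lemma 4.3 is derived, since the proof exhibits an explicit decomposition of the permutation relating $w_{i-1}$ and $w_i$ into the appropriate neighboring transpositions acting within each contiguous block of $I$. Thus this potential obstacle dissolves on inspection, and the argument is essentially a two-line counting estimate once the right extremal pair is chosen.
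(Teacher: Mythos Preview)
Your proposal is correct and follows essentially the same approach as the paper: both arguments choose the extremal pair $u=(n,n-1,\ldots,1)$, $v=(1,2,\ldots,n)$, invoke Lemma~4.3 to bound the number of neighboring transpositions contributed by each edge of a path in $\textit{FJ}(n,k)$, and then compare against the $\binom{n}{2}$ lower bound coming from the disorder function in Theorem~4.1. Your added remark about the concatenation step being well-defined is a reasonable clarification, but does not constitute a different method.
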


\begin{proof}
Let $n$ and $k$ be positive integers with $k < n$ so that \textit{FJ}$(n,k)$ is a non-trivial Full-Flag Johnson graph. To establish our bound, we exhibit two vertices of \textit{FJ}$(n,k)$ for which every joining path has at least $\binom{n}{2}/\binom{k+1}{2}$ edges. Consider two arbitrary vertices $u$ and $v$, and let $P$ be a path from $u$ to $v$ of length $|P|$. By Lemma 4.3, every pair of adjacent vertices in \textit{FJ}$(n,k)$ differ by at most $k + 1 \choose 2$ neighboring transpositions, so $u$ and $v$ differ by an application of at most $|P|$ $k + 1 \choose 2$ neighboring transpositions. Now, for every path $P$ joining the two vertices $u = (1, 2, \ldots, n)$ and $v = (n, n-1, \ldots, 1)$, an upper bound on the number of neighboring transpositions by which $u$ and $v$ differ is $|P|$ $k + 1 \choose 2$, and a lower bound of $\binom{n}{2}$ was shown in the proof of Theorem~4.1. Thus $|P| \geq \binom{n}{2}/\binom{k+1}{2}$, as desired.
\end{proof}

\section{Recursive Structures}
\noindent
We now investigate the recursive structure of the graphs \textit{FJ}$(n,k)$ for successive values of $n$. \\
\\
Let $n$ be a positive integer. For every integer $i \in [n+1]$, define the $i$th insertion function on $S_n$ to be the mapping $\phi_i$ that takes each element of $S_n$ and inserts an $n+1$ in the $i$th place. That is, for every permutation $u = (u_1, u_2, \ldots, u_{i-1}, u_i, \ldots, u_n)$ of $(n)$, let $\phi_i(u)$ be the permutation $(u_1, u_2, \ldots, u_{i-1}, n+1, u_i, \ldots, u_n)$ of $(n+1)$. For every Full-Flag Johnson graph \textit{FJ}$(n,k)$, it is clear that that, for every $i \in [n+1]$, each $\phi_i$ constitutes a bijection between $V(\textit{FJ}(n,k))$ and a subset of $V(\textit{FJ}(n+1,k))$. In fact, the following theorem holds:

\pagebreak
\begin{thm} The graph \textit{FJ}$(n,k)$ is a subgraph of \textit{FJ}$(n+1,k)$.
\end{thm}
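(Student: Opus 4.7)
The plan is to use the insertion function $\phi_{n+1}$ defined in the paragraph preceding the theorem, which sends $u = (u_1, u_2, \ldots, u_n)$ to $\phi_{n+1}(u) = (u_1, u_2, \ldots, u_n, n+1)$. This map is clearly injective, so it gives a vertex embedding of $V(\textit{FJ}(n,k))$ into $V(\textit{FJ}(n+1,k))$. The remaining task is simply to check that if $u$ and $v$ are adjacent in $\textit{FJ}(n,k)$, then $u' := \phi_{n+1}(u)$ and $v' := \phi_{n+1}(v)$ are adjacent in $\textit{FJ}(n+1,k)$.

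For this, I would directly count the indices at which the prefix sets disagree. For each $i \in [n]$, appending $n+1$ at the end does not alter the first $n$ entries, so $u'(i) = \{u_1, \ldots, u_i\} = u(i)$ and likewise $v'(i) = v(i)$; hence $u'(i) \neq v'(i)$ if and only if $u(i) \neq v(i)$. For $i = n+1$ we always have $u'(n+1) = v'(n+1) = [n+1]$, contributing no disagreement. Therefore the set of indices $i \in [n+1]$ at which $u'(i) \neq v'(i)$ is in bijection with the set of indices $i \in [n]$ at which $u(i) \neq v(i)$, and in particular has cardinality $k$. So $u'$ and $v'$ are adjacent in $\textit{FJ}(n+1,k)$, completing the verification that the image of $\phi_{n+1}$ carries a copy of $\textit{FJ}(n,k)$ inside $\textit{FJ}(n+1,k)$.

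There is no serious obstacle here; the only subtlety worth flagging is that not every $\phi_i$ preserves adjacency. Inserting $n+1$ at an interior position $i$ shifts all subsequent prefix sets by one index and adjoins $\{n+1\}$ to each, which can create an extra disagreement at the position $i$ whenever $u(i-1) \neq v(i-1)$. The maps $\phi_1$ and $\phi_{n+1}$ are the two natural choices that avoid this issue (because $u(0) = v(0)$ and $u(n) = v(n)$ hold automatically), and I would pick $\phi_{n+1}$ as the cleanest embedding to write down.
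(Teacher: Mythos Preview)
Your proof is correct and essentially the same as the paper's: the paper uses $\phi_1$ as the primary embedding and notes at the end that $\phi_{n+1}$ works equally well, while you chose $\phi_{n+1}$ directly. One small remark: the paper frames the goal as showing $\phi_1$ is a graph isomorphism onto its image (i.e., preserves both adjacency and non-adjacency), whereas you state only the adjacency direction as the task; however, your bijection-of-disagreement-indices argument in fact establishes both directions, so nothing is missing.
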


\begin{proof} Let \textit{FJ}$(n,k)$ be a Full-Flag Johnson graph. We claim that $\phi_1$ induces a graph isomorphism between \textit{FJ}$(n,k)$ and a subgraph of \textit{FJ}$(n+1,k)$. We have already noted that $\phi_1$ is a bijection between $V(\textit{FJ}(n,k))$ and a subset of $V(\textit{FJ}(n+1,k))$; it remains to show that $\phi_1$ preserves adjacency and non-adjacency. Let $u$ and $v$ be two arbitrary vertices in \textit{FJ}$(n,k)$. We show that $u$ and $v$ are adjacent in \textit{FJ}$(n,k)$ if and only if $\phi_1(u)$ and $\phi_1(v)$ are adjacent in \textit{FJ}$(n+1,k)$. Write $u= (u_1, u_2, \ldots, u_n)$ and $v = (v_1, v_2, \ldots, v_n)$, then:
\begin{align*}
\phi_1(u) &= (n+1, u_1, u_2, \ldots, u_n)\text{ and} \\
\phi_1(v) &= (n+1, v_1, v_2, \ldots, v_n).
\end{align*}
For every $i \in [n]$, we claim that $u(i) = v(i)$ if and only if $\phi_1(u)(i+1) = \phi_1(v)(i+1)$. This follows immediately from the equalities:
\begin{align*}
\phi_1(u)(i + 1)& = \{n+1, u_1, u_2, \ldots, u_i \} = u(i) \cup \{n+1\}\text{ and} \\ 
\phi_1(v)(i + 1) &= \{n+1, v_1, v_2, \ldots, v_i \} = v(i) \cup \{n+1\}
\end{align*}
and the fact that $n+1$ is not an element of either $u$ or $v$. Thus $u(i) \neq v(i)$ for exactly $k$ indices $i \in [n]$ if and only if $\phi_1(u)(i + 1) \neq \phi_1(v)(i + 1)$ for exactly those $k$ indices $i \in [n]$. Noting that $\phi_1(u)(1) = \{n+1\} = \phi_1(v)(1)$, it is then clear that $\phi_1$ preserves adjacency and non-adjacency, as desired. \\
\\
It can similarly be seen that $\phi_{n+1}$ is a graph isomorphism between \textit{FJ}$(n,k)$ and a subgraph of \textit{FJ}$(n+1,k)$. However, in general for $2 \leq i \leq n$, $\phi_i$ is not a graph isomorphism.
\end{proof}

\noindent
Generalizing this theorem requires a few additional notions. An ordering of a set is a particular permutation of its elements. A vertex ordering of a graph is an ordering of its vertex set. \\
\\
Let $G$ be a graph of order $n$ and fix some ordering $S = (v_1, v_2, \ldots, v_n)$ of its vertex set. The adjacency matrix of $G$ with respect to $S$ is the $n \times n$ matrix, denoted by $A(G, S)$, whose $(i,j)$th element is 1 if $v_i$ and $v_j$ are adjacent in $G$ and 0 otherwise. \\
\\
Let \textit{FJ}$(n,k)$ be a Full-Flag Johnson graph and $S$ be some ordering of its vertex set. For every $i \in [n+1]$, define $\phi_i(S)$ to be the sequence of permutations formed by taking the (permutation-)elements of $S$ and applying $\phi_i$ to each one of them. For example, let $n = 3$ and
\begin{center}
$S = (123, 132, 213, 231, 312, 321)$,
\end{center}
where we have suppressed extra commas and parentheses for clarity (e.g., (1, 2, 3) is written as 123). Then:
\begin{align*}
\phi_1(S) &= (4123, 4132, 4213, 4231, 4312, 4321), \\
\phi_2(S) &= (1423, 1432, 2413, 2431, 3412, 3421), \\
\phi_3(S) &= (1243, 1342, 2143, 2341, 3142, 3241)\text{, and} \\
\phi_4(S) &= (1234, 1324, 2134, 2314, 3124, 3214).
\end{align*}
Now let $\overline{S}$ be the concatenation of $\phi_1(S), \phi_2(S), \ldots, \phi_{n+1}(S)$, in that order. It is clear that $\overline{S}$ is a vertex ordering of \textit{FJ}$(n+1,k)$. Our goal will be to study the adjacency matrix $A(\textit{FJ}(n+1,k),\overline{S})$ of \textit{FJ}$(n+1,k)$ in terms of the adjacency matrices $A(\textit{FJ}(n,k),S)$ of \textit{FJ}$(n,k)$ and other lower-order Full-Flag Johnson graphs. \\
\\
The dimensions of $A(\textit{FJ}(n,k),S)$ and $A(\textit{FJ}(n+1,k),\overline{S})$ are $n! \times n!$ and $(n+1)! \times (n+1)!$, respectively. It is thus natural to decompose $A(\textit{FJ}(n+1,k),\overline{S})$ into $(n+1)^2$ $n! \times n!$ submatrices, each identical in size to $A(\textit{FJ}(n,k),S)$. For every pair of integers $i, j \in [n+1]$, denote by $A(\textit{FJ}(n+1,k),\overline{S})[i,j]$ the $(i, j)$th submatrix of $A(\textit{FJ}(n+1,k),\overline{S})$, as illustrated in Figure 2. It is clear that $A(\textit{FJ}(n+1,k),\overline{S})[i,j]$ consists of the entries in $A(\textit{FJ}(n+1,k),\overline{S})$ corresponding to adjacencies between vertices in $\phi_i(S)$ and vertices in $\phi_j(S)$. \\

\begin{figure}[h!]
\centering
\includegraphics[scale=0.7]{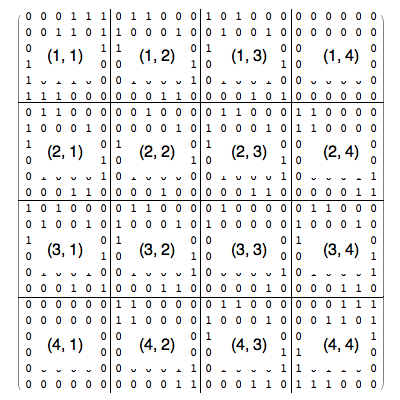}
\caption{The adjacency matrix of \textit{FJ}$(4,2)$ divided into $4^2$ $3! \times 3!$ submatrices.}
\end{figure}

\begin{thm} Let \textit{FJ}$(n,k)$ be a non-trivial Full-Flag Johnson graph and $S$ be an ordering of its vertex set. With respect to the vertex ordering $\overline{S}$, which is the concatenation of $\phi_1(S), \phi_2(S), \ldots, \phi_{n+1}(S)$ in that order, $\textit{FJ}(n+1, k)$ satisfies the following properties:
\begin{enumerate} \itemsep0pt
\item For all $i, j \in [n + 1]$ with $\left|i-j\right|>k$, $A(\textit{FJ}(n+1, k),\overline{S})[i,j]$ is the $n! \times n!$ zero matrix,
\item Relating $\textit{FJ}(n+1,k)$ with $\textit{FJ}(n,k)$: \\
$A(\textit{FJ}(n+1,k),\overline{S})[1,1] = A(\textit{FJ}(n+1,k),\overline{S})[n+1,n+1] = A(\textit{FJ}(n,k),S)$, and
\item Relating $\textit{FJ}(n+1,k)$ with $\textit{FJ}(n,k-1)$: for all $i, j \in [n+1]$ with $\left|i-j\right|=1$, \\
$A(\textit{FJ}(n+1,k),\overline{S})[i,j] = A(\textit{FJ}(n, k-1),S)$. 
\end{enumerate}

\end{thm}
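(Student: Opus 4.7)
The plan is to analyze the prefix sets $\phi_i(u)(\ell)$ for each $\ell \in [n+1]$ in terms of the original sets $u(m)$, then compare $\phi_i(u)$ and $\phi_j(v)$ directly by counting the indices at which these prefix sets differ. A quick calculation gives
\begin{equation*}
\phi_i(u)(\ell) = \begin{cases} u(\ell) & \text{if } \ell < i, \\ u(\ell-1) \cup \{n+1\} & \text{if } \ell \geq i. \end{cases}
\end{equation*}
I would then fix $u, v \in S_n$ and indices $i \leq j$ in $[n+1]$, and split $[n+1]$ into the three ranges $\ell < i$, $i \leq \ell < j$, and $\ell \geq j$, determining in each whether $\phi_i(u)(\ell) = \phi_j(v)(\ell)$.

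In the first range both sets reduce to $u(\ell)$ and $v(\ell)$, so they differ precisely when $u(\ell) \neq v(\ell)$. In the second range, $\phi_i(u)(\ell) = u(\ell-1) \cup \{n+1\}$ while $\phi_j(v)(\ell) = v(\ell)$, and since $n+1$ lies in the former but not the latter, the two always differ --- contributing exactly $j - i$ forced differences. In the third range, both sets consist of $n+1$ adjoined to $u(\ell-1)$ and $v(\ell-1)$ respectively, so they differ precisely when $u(\ell-1) \neq v(\ell-1)$.

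The three properties of the theorem then fall out by specialization. For (1), if $j - i > k$ the middle range alone produces more than $k$ differences between $\phi_i(u)$ and $\phi_j(v)$ for every pair $u, v$, so $A(\textit{FJ}(n+1,k),\overline{S})[i,j]$ is identically zero. For (2), I would handle the two corners separately: when $i = j = 1$ the first and middle ranges are empty and the third range counts exactly $|\{m \in [n] : u(m) \neq v(m)\}|$ differences (using $u(0) = v(0) = \emptyset$ and $u(n) = v(n) = [n]$); the case $i = j = n+1$ is symmetric with only the first range contributing. Either way, adjacency in \textit{FJ}$(n+1,k)$ matches adjacency in \textit{FJ}$(n,k)$. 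For (3), taking $j = i+1$ yields one forced middle difference, while the first and third ranges cover the $u$-vs-$v$ comparisons at indices $m \in [i-1]$ and $m \in [i, n]$ respectively, partitioning $[n]$; the total count is $1 + |\{m \in [n] : u(m) \neq v(m)\}|$, so adjacency in \textit{FJ}$(n+1,k)$ is equivalent to adjacency in \textit{FJ}$(n, k-1)$. The case $j = i - 1$ follows by symmetry.

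The main obstacle is keeping the range boundaries clean --- in particular verifying that in property (3) the index sets $[i-1]$ and $[i, n]$ partition $[n]$ exactly once, so the $u$-vs-$v$ differences are counted without doubling. (For intermediate $i = j$ with $1 < i < n+1$ the picture is different: the last index $\ell = i-1$ of the first range and the first index $\ell = i$ of the third range both test equality of $u(i-1)$ and $v(i-1)$, double-counting that comparison and explaining why property (2) is stated only for the two corners.) Once these index bookkeeping issues are handled, the remainder is direct substitution.
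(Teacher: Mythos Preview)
Your proposal is correct and follows essentially the same approach as the paper: both arguments reduce to comparing the prefix sets $\phi_i(u)(\ell)$ and $\phi_j(v)(\ell)$, using that the position of $n+1$ forces $j-i$ unavoidable differences and that outside this middle range the comparison collapses to $u(m)$ versus $v(m)$. The only organizational difference is that the paper treats the three parts separately (invoking the earlier isomorphism result for part~(2) and an explicit bijection $f$ for part~(3)), whereas you set up the single formula for $\phi_i(u)(\ell)$ once and specialize; your parenthetical remark on the double-counting at $m=i-1$ for intermediate diagonal blocks is a correct bonus observation not in the paper.
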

\noindent
Before we give a proof of Theorem 5.2, it will be illuminating to consider the example given in Figure 3. Let $n = 3$, $k = 2$, and fix the vertex ordering $S = (123, 132, 213, 231, 312, 321)$. 
\pagebreak
\begin{figure}[h!]
\centering
\includegraphics[scale=0.8]{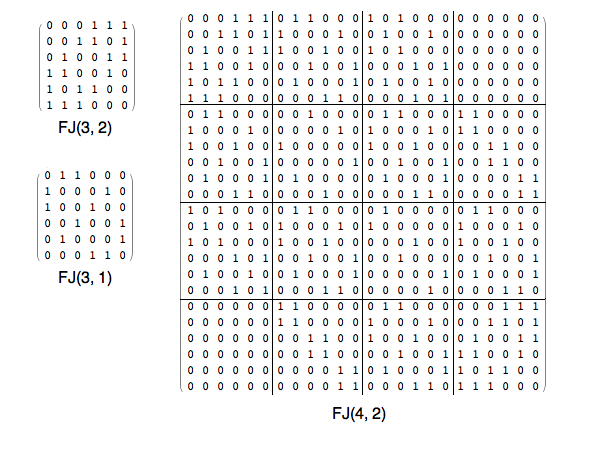}
\caption{An illustration of Theorem 5.2 for $S = (123, 132, 213, 231, 312, 321)$.}
\end{figure}
\noindent
\\
Figure 3 makes the assertions of Theorem 5.2 readily apparent. Part $(1)$ of Theorem 5.2 states that the upper right and lower left $3! \times 3!$ submatrices of $A(\textit{FJ}(4,2),\overline{S})$ are zero. Part $(2)$ states that the upper left and lower right $3! \times 3!$ submatrices of $A(\text{FJ}(4,2),\overline{S})$ should be identical to $A(\textit{FJ}(3,2), S)$, which we see is indeed the case. Finally, part $(3)$ states that the $3! \times 3!$ submatrices on either side of the main diagonal are identical to $A(\textit{FJ}(3,1),S)$. \\
\\
We now give a proof of Theorem 5.2. 

\begin{proof} We begin with the first assertion. Consider arbitrary $i, j \in [n+1]$ with $\left|i-j\right| > k$, and assume without loss of generality that $i < j$. The entries of $A(\textit{FJ}(n+1,k),\overline{S})[i,j]$ are the entries of $A(\textit{FJ}(n+1,k),\overline{S})$ corresponding to adjacencies between the vertices of $\phi_i(S)$ and the vertices of $\phi_j(S)$. We thus need to show that no vertex of $\phi_i(S)$ is adjacent to a vertex of $\phi_j(S)$ in \textit{FJ}$(n+1, k)$. Let $u$ and $v$ be vertices in $\phi_i(S)$ and $\phi_j(S)$, respectively. Then $u$ has $n+1$ in its $i$th place, and $v$ has $n+1$ in its $j$th place. This means that $u(x) \neq v(x)$ for all $x \in [n+1]$ with $i \leq x < j$, since $n+1 \in u(x)$  but $n+1 \notin v(x)$. We thus have at least $j-i$ integers $x \in [n+1]$ for which $u(x) \neq v(x)$. But by assumption $j-i > k$, so $u$ and $v$ can not be adjacent in \textit{FJ}$(n+1,k)$. This shows that $A(\textit{FJ}(n+1,k),\overline{S})[i,j]$ is the zero matrix. \\
\\
The second assertion follows immediately from the graph isomorphisms $\phi_1$ and $\phi_{n+1}$ given in the proof of Theorem 5.1. Note that the submatrix $A(\textit{FJ}(n+1,k),\overline{S})[1,1]$ consists of the entries of $A(\textit{FJ}(n+1,k),\overline{S})$ corresponding to the adjacencies among the vertices of $\phi_1(S)$. But $\phi_1(S)$ is constructed by taking $S$ and applying the graph isomorphism $\phi_1$ to each element. Hence $A(\textit{FJ}(n+1,k),\overline{S})[1,1]$ is identical to $A(\textit{FJ}(n,k),S)$. Similarly, $\phi_{n+1}(S)$ is the image of the elements of $S$ under $\phi_{n+1}$, and so $A(\textit{FJ}(n+1,k),\overline{S})[n+1,n+1]$ is also equal to $A(\textit{FJ}(n,k),S)$. \\
\\
We now prove the third assertion. Let $i \in [n]$. We show that $A(\textit{FJ}(n+1,k),\overline{S})[i,i+1]$ is equal to the adjacency matrix $A(\textit{FJ}(n, k-1),S)$ of $\textit{FJ}(n, k-1)$. By symmetry, this will also show that $A(\textit{FJ}(n+1,k),\overline{S})[i+1,i] = A(\textit{FJ}(n,k-1),S)$, and so complete the proof. We prove that for every two vertices $u$ and $v$ in \textit{FJ}$(n,k-1)$, $u$ and $v$ are adjacent in \textit{FJ}$(n,k-1)$ if and only if $\phi_i(u)$ and $\phi_{i+1}(v)$ are adjacent in \textit{FJ}$(n+1,k)$. Indeed, let:
\begin{align*}
u &= (u_1, u_2, \ldots, u_{i-1}, u_i, u_{i+1}, \ldots, u_n)\text{ and} \\
v &= (v_1, v_2, \ldots, v_{i-1}, v_i, v_{i+1}, \ldots, v_n)
\end{align*}
so that:
\begin{align*}
&\phi_i(u) =  (u_1, u_2, \ldots, u_{i-1}, n+1, u_i, u_{i+1}, \ldots, u_n)\text{ and} \\
&\phi_{i+1}(v) = (v_1, v_2, \ldots, v_{i-1}, v_i, n+1, v_{i+1}, \ldots, v_n).
\end{align*}
Then the following assertions hold: 
\begin{enumerate} \itemsep0pt
\item For all integers $x$ with $1 \leq x \leq i-1$, $u(x) = v(x)$ if and only if $\phi_i(u)(x) = \phi_{i+1}(v)(x)$, since $\phi_i(u)(x) = u(x)$ and $\phi_i(v)(x) = v(x)$, and 
\item For all integers $x$ with $i \leq x \leq n$, $u(x) = v(x)$ if and only if $\phi_i(u)(x+1) = \phi_{i+1}(v)(x+1)$, since $\phi_i(u)(x+1) = u(x) \cup \{n+1\}$ and $\phi_{i+1}(v)(x+1) = v(x) \cup \{n+1\}$.
\end{enumerate}
Let $f: [n] \rightarrow [n+1] - \{i\}$ be the bijection defined by

\begin{center}
$f(x)=\left\{     
\begin{array}{lr}
x & : 1\leq x \leq i-1, \\
x+1 & : i \leq x \leq n.
\end{array}
\right.$ \\
\end{center}

\noindent
Then $u(x) \neq v(x)$ for $k-1$ indices $x \in [n]$ if and only if $\phi_i(u)(f(x)) \neq \phi_{i+1}(v)(f(x))$ for exactly those $k-1$ indices $x \in [n]$. In addition, we also have $\phi_i(u)(i) \neq \phi_{i+1}(v)(i)$, since $n+1 \in \phi_i(u)(i)$ but $n+1 \notin \phi_{i+1}(v)(i)$. Hence $u$ and $v$ are adjacent in \textit{FJ}$(n,k-1)$ if and only if $\phi_i(u)$ and $\phi_{i+1}(v)$ are adjacent in \textit{FJ}$(n+1,k)$, as desired.
\end{proof}

\section{Applications to Permutahedra}
\noindent
As shown in Lemma 2.4, for each positive integer $n$ the Full-Flag Johnson graph \textit{FJ}$(n,1)$ may be identified with the order-$n$ permutahedron, which is the Cayley graph on $S_n$ generated by the set of all neighboring transpositions. A neighboring transposition in a permutation of $S_n$ that interchanges the $i$th and $(i+1)$st elements for some $i \in [n-1]$ is termed as the $(i, i+1)$-transposition. Hence \textit{FJ}$(n,1)$ as a Cayley graph is generated by the set of all $(i, i+1)$-transpositions with $i \in [n-1]$ and has regularity $n-1$. \\
\\
We consider Theorem 5.2 in the case that $k = 1$. An illustration of the $n=3$ case is given in Figure 4.

\pagebreak

\begin{figure}[h!]
\includegraphics[scale = 0.6]{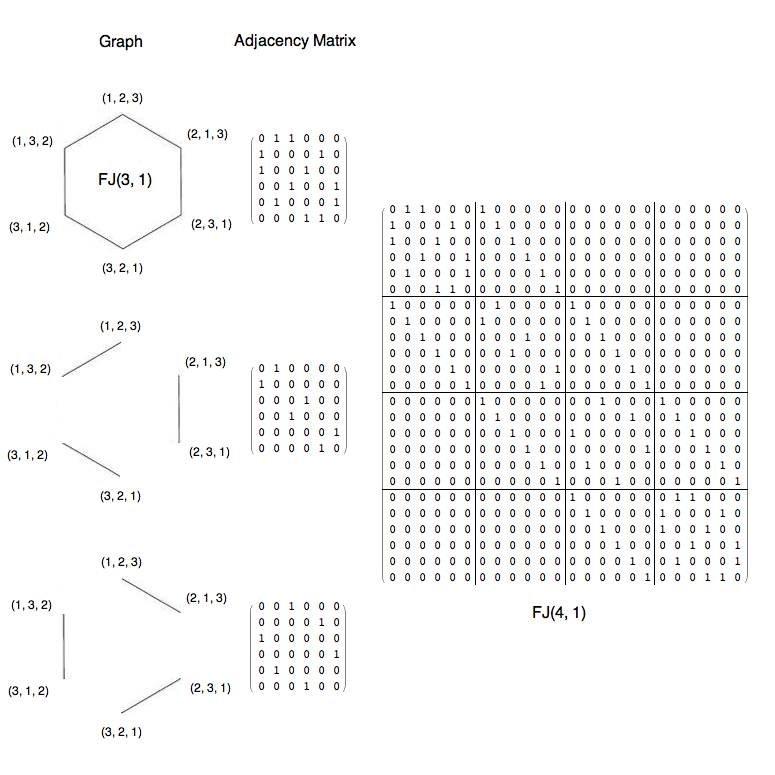}
\caption{An illustration of Theorem 6.1 for $S = (123, 132, 213, 231, 312, 321)$.}
\end{figure}
\noindent
The $3! \times 3!$ submatrices $A(\textit{FJ}(4,1),\overline{S})[i,j]$ for all $i, j \in [4]$ with $|i - j| > 1$ are identically zero, while the $3! \times 3!$ submatrices flanking the main diagonal are the identity. (As shown in Lemma 2.2, the adjacency matrix of \textit{FJ}$(n, 0)$ is the $n! \times n!$ identity matrix.) We thus consider the $3! \times 3!$ submatrices along the main diagonal. As in Theorem 5.2, the upper left and lower right $3! \times 3!$ submatrices are identical to the adjacency matrix $A(\textit{FJ}(3,1),S)$. If we compute the adjacency matrices with respect to $S$ of the two proper, non-trivial, regular subgraphs of \textit{FJ}$(3, 1)$, we furthermore see that these are precisely the remaining two $3! \times 3!$ submatrices on the main diagonal. This is the content of the following theorem.

\begin{thm}
 Let \textit{FJ}$(n,1)$ be the order-$n$ permutahedron and $S$ be an ordering of its vertex set. With respect to the vertex ordering $\overline{S}$, which is the concatenation of $\phi_1(S), \phi_2(S), \ldots, \phi_{n+1}(S)$ in that order, $\textit{FJ}(n+1, 1)$ satisfies the following properties:
\begin{enumerate} \itemsep0pt
\item The matrix $A(\textit{FJ}(n+1,1),\overline{S})$ is block tri-diagonal, with identity matrices on the flanking diagonals. That is, for all $i, j \in [n+1]$: if $|i -j| = 1$, then $A(\textit{FJ}(n+1,1),\overline{S})[i,j]$ is the identity matrix, and if $|i - j| > 1$ then $A(\textit{FJ}(n+1,1),\overline{S})[i,j]$ is the zero matrix, and 
\item For every $i \in [n+1]$, $A(\textit{FJ}(n+1,1),\overline{S})[i,i]$ is identical to the adjacency matrix of a regular subgraph of \textit{FJ}$(n,1)$ with respect to $S$: if $i = 1$ or $i = n+1$, then this subgraph is \textit{FJ}$(n,1)$ itself $($and has regularity $n-1$$)$, else, it is the subgraph generated by excluding the $(i-1,i)$-transposition from the generating set of \textit{FJ}$(n,1)$ $($and has regularity $n-2$$)$.
\end{enumerate}

\end{thm}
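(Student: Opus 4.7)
The plan is to derive part (1) directly from Theorem 5.2 and Lemma 2.2, and to handle part (2) with an endpoint argument for $i=1$ and $i=n+1$ together with a direct case analysis for the interior diagonal blocks.

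For part (1), I would specialize Theorem 5.2 to $k=1$. Clause (1) of that theorem immediately says $A(\textit{FJ}(n+1,1),\overline{S})[i,j]$ is the zero matrix whenever $|i-j|>1$. Clause (3) says the off-diagonal block at $|i-j|=1$ equals $A(\textit{FJ}(n,0),S)$, and by Lemma 2.2 this is the identity matrix since $\textit{FJ}(n,0)$ is the graph in which each vertex is adjacent only to itself.

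For part (2), the endpoints $i=1$ and $i=n+1$ are clause (2) of Theorem 5.2. The substantial case is $2 \leq i \leq n$. Here I would fix two vertices $u,v \in V(\textit{FJ}(n,1))$ and compute directly when $\phi_i(u)$ and $\phi_i(v)$ are adjacent in $\textit{FJ}(n+1,1)$. Unwinding the definition of $\phi_i$, one finds
\[
\phi_i(u)(x) = \begin{cases} u(x) & \text{if } 1 \le x \le i-1, \\ u(i-1) \cup \{n+1\} & \text{if } x = i, \\ u(x-1) \cup \{n+1\} & \text{if } i+1 \le x \le n+1, \end{cases}
\]
and an identical formula for $\phi_i(v)$. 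Let $D = \{y \in [n] : u(y) \ne v(y)\}$ and $D' = \{x \in [n+1] : \phi_i(u)(x) \ne \phi_i(v)(x)\}$. Each $y \in D$ contributes exactly one index to $D'$ (namely $y$ itself if $y < i$, and $y+1$ if $y \ge i$), with one exception: since $u(i-1)$ governs both $\phi_i(u)(i-1)$ and $\phi_i(u)(i)$, the element $y = i-1$, when it lies in $D$, adds \emph{two} indices ($i-1$ and $i$) to $D'$. Consequently $|D'|=|D|$ if $(i-1) \notin D$ and $|D'|=|D|+1$ if $(i-1) \in D$.

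Adjacency in $\textit{FJ}(n+1,1)$ is the condition $|D'|=1$. The second case above forces $|D| \ge 1$ and hence $|D'| \ge 2$, so adjacency fails; the first case reduces to $|D|=1$ with $(i-1) \notin D$, i.e., $u$ and $v$ are adjacent in $\textit{FJ}(n,1)$ and the unique index of disagreement is not $i-1$. By Lemma 2.4 a unique disagreement at index $i-1$ in $\textit{FJ}(n,1)$ corresponds exactly to the $(i-1,i)$-transposition, so the diagonal block $A(\textit{FJ}(n+1,1),\overline{S})[i,i]$ is the adjacency matrix of the Cayley subgraph of $\textit{FJ}(n,1)$ generated by all $n-1$ neighboring transpositions \emph{except} the $(i-1,i)$-transposition, which is $(n-2)$-regular as asserted. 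The main obstacle is the bookkeeping at positions $x=i-1$ and $x=i$: the insertion of $n+1$ in position $i$ causes a single disagreement of $u$ and $v$ at index $i-1$ to propagate to two disagreements of $\phi_i(u)$ and $\phi_i(v)$, and isolating this as the sole obstruction is what pinpoints which transposition is excluded.
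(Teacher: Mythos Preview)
Your proof is correct and follows essentially the same route as the paper: part (1) and the endpoint cases of part (2) are drawn from Theorem~5.2 and Lemma~2.2, and the interior diagonal blocks are handled by showing $\phi_i(u)$ and $\phi_i(v)$ are adjacent in $\textit{FJ}(n+1,1)$ iff $u$ and $v$ differ by a neighboring transposition other than the $(i-1,i)$-transposition. The only cosmetic difference is that you count disagreements of the flag sets $\phi_i(u)(x)$ directly, whereas the paper first invokes Lemma~2.4 at the level of $\textit{FJ}(n+1,1)$ and then observes that the neighboring transposition relating $\phi_i(u)$ to $\phi_i(v)$ cannot touch the element $n+1$ in position $i$.
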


\begin{proof} The first assertion follows immediately from Theorem 5.2. We thus prove the second assertion. Let $i \in[n+1]$ be arbitrary. The case when $i$ is $1$ or $n+1$ again follow from Theorem 5.2; thus, assume that $1 < i < n+1$. We prove that $A(\textit{FJ}(n+1,1),\overline{S})[i,i]$ is identical to the adjacency matrix with respect to $S$ of the Cayley graph on $S_n$ generated by the set of all neighboring transpositions excluding the $(i-1,i)$-transposition. \\
\\
Let $u$ and $v$ be two arbitrary permutations in $S_n$. We show that $\phi_i(u)$ and $\phi_i(v)$ are adjacent in \textit{FJ}$(n+1,1)$ if and only if $u$ and $v$ are related by a neighboring transposition other than the $(i-1,i)$-transposition. Write:
\begin{align*}
u &= (u_1, u_2, \ldots, u_{i-1}, u_i, \ldots, u_n)\text{ and} \\
v &= (v_1, v_2, \ldots,  v_{i-1}, v_i, \ldots, v_n)
\end{align*}
so that:
\begin{align*}
\phi_i(u) &= (u_1, u_2, \ldots, u_{i-1}, n+1, u_i, \ldots, u_n)\text{ and} \\
\phi_i(v) &= (v_1, v_2, \ldots,  v_{i-1}, n+1, v_i, \ldots, v_n).
\end{align*}
Assume that $\phi_i(u)$ and $\phi_i(v)$ are adjacent in \textit{FJ}$(n+1,1)$. Then $\phi_i(u)$ and $\phi_i(v)$ are related via a neighboring transposition. Now, it is clear that this neighboring transposition can not interchange the elements $u_{i-1}$ and $n+1$ of $u$. For if it did, we would have $v_{i-1} = n+1$, which is impossible. Similarly, it can not interchange the elements $n+1$ and $u_i$ of $u$. Hence the neighboring transposition in question must interchange the elements $u_x$ and $u_{x+1}$ of $u$ for some $x \in [n] - \{i-1\}$. This means that $v$ is related to $u$ via the $(x, x+1)$-transposition; since $x \neq i-1$, we have $(x, x+1) \neq (i-1, i)$, as desired. \\
\\
Conversely, it is easily seen that if $u$ and $v$ are related by a neighboring transposition other than the $(i-1,i)$-transposition, then $\phi_i(u)$ and $\phi_i(v)$ must be adjacent in \textit{FJ}$(n+1,1)$. This completes the proof.
\end{proof}
\noindent
By Theorem 6.1, for all $i, j \in [n+1]$ each submatrix $A(\textit{FJ}(n+1,1),\overline{S})[i,j]$ of $A(\textit{FJ}(n+1,1),\overline{S})$ is identical to the adjacency matrix of a regular graph. For all $n \geq 2$ and orderings $S$ of $S_n$, define the regularity matrix of \textit{FJ}$(n, 1)$ with respect to $S$ to be the $n \times n$ matrix whose $(i,j)$th element is the regularity of $A(FJ(n, 1), S)[i,j]$ for all $i, j \in [n]$. Denoting this matrix by $M$, Theorem 6.1 shows that $M$ is of the form: \\
\\
\[
M =
\left[ {\begin{array}{ccccccccc}
n-2 & 1 & 0 & 0 & \ldots & 0 & 0 & 0 & 0 \\
1 & n-3 & 1 & 0 & \ldots & 0 & 0 & 0 & 0 \\
0 & 1 & n-3 & 1 & \ldots & 0 & 0 & 0 & 0 \\
\ & \ & \ & \ & \vdots \\
0 & 0 & 0 & 0 & \ldots & 1 & n-3 & 1 & 0 \\
0 & 0 & 0 & 0 & \ldots & 0 & 1 & n-3 & 1 \\
0 & 0 & 0 & 0 & \ldots & 0 & 0 & 1 & n-2
 \end{array} } \right].
\] 
\\
\begin{thm}
The eigenvalue spectrum of $M$ is a subset of the eigenvalue spectrum of \textit{FJ}$(n, 1)$ (not considering multiplicity).
\end{thm}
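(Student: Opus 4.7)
The plan is to interpret $M$ as the quotient matrix of an equitable partition of $V(\textit{FJ}(n,1))$ and then apply the classical fact that every eigenvalue of the quotient matrix of an equitable partition is an eigenvalue of the full adjacency matrix. First, by permuting rows and columns if necessary (which changes neither $M$ nor the spectrum of $\textit{FJ}(n,1)$), I may assume $S = \overline{S'}$ for some ordering $S'$ of $V(\textit{FJ}(n-1,1))$, so that $S$ is the concatenation $\phi_1(S'), \phi_2(S'), \ldots, \phi_n(S')$. This partitions $V(\textit{FJ}(n,1))$ into $n$ cells $V_1, \ldots, V_n$ of size $(n-1)!$, where $V_i$ consists of all permutations of $(n)$ that have $n$ in position $i$.

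Next I would verify that this partition is equitable, meaning every vertex in $V_i$ has the same number of neighbors in $V_j$. Applying Theorem 6.1 with $n$ replaced by $n-1$, every block $A(\textit{FJ}(n,1),S)[i,j]$ is the adjacency matrix of a regular graph: the zero graph when $|i-j|>1$, the identity (a perfect matching) when $|i-j|=1$, and a regular subgraph of $\textit{FJ}(n-1,1)$ of regularity $n-2$ or $n-3$ on the diagonal. Every row of such a block therefore sums to its regularity, which by the definition of the regularity matrix equals $M_{ij}$. Hence each vertex in $V_i$ has exactly $M_{ij}$ neighbors in $V_j$, which is precisely the equitable partition property.

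The rest is the standard lifting argument. Let $P$ be the $n! \times n$ characteristic matrix of the partition, defined by $P_{v,j} = 1$ if $v \in V_j$ and $0$ otherwise. The equitability condition translates exactly to the matrix identity $AP = PM$, where $A = A(\textit{FJ}(n,1),S)$. Given any eigenvector $x \neq 0$ of $M$ with $Mx = \lambda x$, we compute
$$A(Px) = P(Mx) = \lambda (Px),$$
and since the columns of $P$ are indicators of disjoint nonempty cells, $P$ has full column rank, so $Px \neq 0$. Thus $\lambda$ is an eigenvalue of $A$, and hence of $\textit{FJ}(n,1)$. The only substantive step is the equitability verification, which is immediate from Theorem 6.1 together with the definition of $M$; I do not anticipate any serious obstacle beyond this observation.
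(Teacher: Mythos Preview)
Your proposal is correct and is essentially the same argument as the paper's, phrased in the standard language of equitable partitions: the paper's vectors $s_i$ are the columns of your characteristic matrix $P$, its homomorphism $\phi$ is multiplication by $P$, and its key identity $\phi(Mv)=A\phi(v)$ is exactly your intertwining relation $AP=PM$, both derived from Theorem~6.1 and the regularity of each block. The only cosmetic difference is that you name the concept (equitable partition / quotient matrix) whereas the paper carries out the lifting by hand.
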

\begin{proof}
Denote by $\mathbb{R}$ the set of all reals. Let $s_i$ be the vector in $\mathbb{R}^{n!}$ with a 1 in positions $(i-1)(n-1)! + k$ for all $k \in [n-1]$ and 0s elsewhere. For example, if $n = 3$, 
\[
s_1 = \left[\begin{array}{c} 1 \\1 \\0 \\0 \\0 \\0 \\ \end{array}\right], \ \
s_2 = \left[\begin{array}{c} 0 \\0 \\1 \\1 \\0 \\0 \\ \end{array}\right], \text{ and} \ \
s_3 = \left[\begin{array}{c} 0 \\0 \\0 \\0 \\1 \\1 \\ \end{array}\right].
\] 
Consider the vector space homomorphism $\phi: \mathbb{R}^n \rightarrow \mathbb{R}^{n!}$ mapping the canonical basis vector $e_i$ to $s_i$ for every $i \in [n]$. Denote by $A$ the adjacency matrix $A(\textit{FJ}(n,1),S)$. Then it is not difficult to see that for every $v \in \mathbb{R}^n$, $\phi(Mv) = A\phi(v)$. It suffices to prove the equivalence for all basis vectors $e_i$ where $i \in [n]$. \\
\\
In general, let $R$ be the adjacency matrix of a regular graph with order $m$ and regularity $r$. If $w$ is the column vector in $\mathbb{R}^m$ consisting of all $1$s, it is clear that $Rw = rw$. The desired equivalence then follows from writing the matrix multiplication $As_i$ in terms of blocks and applying Theorem 6.1. This immediately implies the theorem, since if $v$ is an eigenvector of $M$ with eigenvalue $\lambda$, then $\phi(v)$ is an eigenvector of $A$ with the same eigenvalue.
\end{proof}
\noindent
As an example of Theorem 6.2, consider the case when $n = 4$. The eigenvalue spectrum of \textit{FJ}$(4,1)$ is (without multiplicity):
\begin{center}
$\{3, 1+\sqrt{2}, \sqrt{3}, 1, -1+\sqrt{2}, 1-\sqrt{2}, -1, -\sqrt{3}, -1-\sqrt{2}, -3\}$
\end{center}
as can be verified through the use of a computer algebra system. The regularity matrix of \textit{FJ}$(4,1)$ is:
\[
M = \left[\begin{array}{cccc}
2 & 1 & 0 & 0 \\
1 & 1 & 1 & 0 \\
0 & 1 & 1 & 1 \\
0 & 0 & 1 & 2
\end{array}\right]
\]
and has the eigenvalue spectrum:
\begin{center}
$\{3, 1+\sqrt{2}, 1, 1-\sqrt{2}\}$,
\end{center}
which is a subset of the spectrum of \textit{FJ}$(4,1)$. In general, determining the eigenvalue spectrum of a graph is very difficult; in our case, \textit{FJ}$(n, 1)$ has $n!$ vertices, so brute-force computation is impractical for large values of $n$. Theorem 6.2 succeeds in reducing the computation to finding the eigenvalues of an $n \times n$ matrix, but only gives a subset of the spectrum. It can trivially be seen that this subset must include the largest eigenvalue of \textit{FJ}$(n, 1)$; we conjecture that it also includes the second-largest. (This has been verified for all $n \leq 5$.) For applications of the second-largest eigenvalue of a graph, see \cite{hoory}.


\begin{thebibliography}{9}

\bibitem{elhashash}
M. El-Hashash and H. Burgiel.
The Permutahedron $\pi_n$ is Hamiltonian.
\emph{International Journal of Contemporary Mathematical Sciences}, 4(1): 31 - 39, 2009.

\bibitem{hoory}
S. Hoory, N. Linial, and A. Widgerson.
Expander graphs and their applications.
\emph{Bulletin of the American Mathematical Society}, 43: 439-561, 2006.

\bibitem{krebs}
M. Krebs and A. Shaheen,
On the spectra of Johnson graphs.
\emph{Electronic Journal of Linear Algebra}, 17: 154-167, 2008.

\bibitem{thomas}
R. Thomas.
\emph{Lectures on Geometric Combinatorics},
American Mathematical Society, 2006.

\end{thebibliography}
\end{document}